\DeclarePairedDelimiter{\ceil}{\lceil}{\rceil}
\newcommand{\R}{\mathbb{R}}
\newcommand{\C}{\mathbb{C}}
\newcommand{\T}{\mathbb{T}}
\newcommand{\Z}{\mathbb{Z}}
\newcommand{\N}{\mathbb{N}}
\newcommand{\diff}{\mathop{}\!\mathrm{d}}
\newtheorem{theorem}{Theorem}[section]
\newtheorem{definition}[theorem]{Definition}
\newtheorem{example}[theorem]{Example}
\newtheorem{proposition}[theorem]{Proposition}
\newtheorem{question}[theorem]{Question}
\newtheorem{remark}[theorem]{Remark}
\newtheorem{lemma}[theorem]{Lemma}
\newtheorem{claim}{Claim}
\theoremstyle{plain}
\newtheorem*{namedthm}{\namedthmname}
\newcounter{namedthm}
\newenvironment{named}[2]
{\def\namedthmname{#1}
	\refstepcounter{namedthm}
	\namedthm[#2]\def\@currentlabel{#1}}
{\endnamedthm}
\renewcommand{\tocsection}[3]{%
  \indentlabel{\@ifnotempty{#2}{\bfseries\ignorespaces#1 #2\quad}}\bfseries#3}
\renewcommand{\tocsubsection}[3]{%
  \indentlabel{\@ifnotempty{#2}{\ignorespaces#1 #2\quad}}#3}
\newcommand\@dotsep{4.5}
\def\@tocline#1#2#3#4#5#6#7{\relax
  \ifnum #1>\c@tocdepth 
  \else
    \par \addpenalty\@secpenalty\addvspace{#2}%
    \begingroup \hyphenpenalty\@M
    \@ifempty{#4}{%
      \@tempdima\csname r@tocindent\number#1\endcsname\relax
    }{%
      \@tempdima#4\relax
    }%
    \parindent\z@ \leftskip#3\relax \advance\leftskip\@tempdima\relax
    \rightskip\@pnumwidth plus1em \parfillskip-\@pnumwidth
    #5\leavevmode\hskip-\@tempdima{#6}\nobreak
    \leaders\hbox{$\m@th\mkern \@dotsep mu\hbox{.}\mkern \@dotsep mu$}\hfill
    \nobreak
    \hbox to\@pnumwidth{\@tocpagenum{\ifnum#1=1\bfseries\fi#7}}\par
    \nobreak
    \endgroup
  \fi}
\renewcommand\csname r@tocindent0\endcsname{0pt}
\def\l@subsection{\@tocline{2}{0pt}{2.5pc}{5pc}{}}
\title{Distinguishing Sets of Strong Recurrence from Van Der Corput Sets}
\date{\today}
\begin{document}
	\bigskip
	\bigskip
	\bigskip
	\bigskip
	
	\maketitle
	\bigskip
	
	\begin{center}
		\textsc{Andreas Mountakis} \bigskip \bigskip \bigskip \bigskip
		
		\textbf{ABSTRACT}
	\end{center}
	
Sets of recurrence, which were introduced by Furstenberg, and van der Corput sets, which
were introduced by Kamae and Mend\'es France, as well as variants thereof, are important 
classes of sets in Ergodic Theory. In this paper, we construct a set of strong recurrence 
which is not a van der Corput set. In particular, this shows that the class of enhanced van 
der Corput sets is a proper subclass of sets of strong recurrence. 
This answers some questions asked by Bergelson and Lesigne.
	
\bigskip \bigskip
	
\tableofcontents

\newpage
	
\section{Introduction}\label{intro}
Throughout this paper, $\N$ stands for the natural numbers excluding zero, 
while $\N_0$ stands for the natural numbers including zero. 
In addition, whenever we consider a measure on the torus $\T$, it is implicit that 
the underlying $\sigma$-algebra is the Borel $\sigma$-algebra on $\T$, so the measure 
is a Borel measure on $\T$.
	
Inspired by the Poincar\'e Recurrence Theorem, Furstenberg gave the following natural 
definition of sets of recurrence:
\begin{definition}\label{def1}
A set $D\subseteq \mathbb{N}$ is called a set of recurrence if for any measure preserving 
system $(X,\mathcal{A},\mu,T)$ and any $A\in \mathcal{A}$ with $\mu(A)>0$, there exists 
$n\in D$ such that $\mu(A\cap T^{-n}A)>0$.
\end{definition}
	
From the Poincar\'e Recurrence theorem one derives that $\N$ is a set of recurrence. 
In fact, one can easily derive from it that for any $k\in \N$, 
the set $k\N=\{kn:n\in \N\}$ is a set of recurrence. 
More generally, if $(n_{i})_{i\in \mathbb{N}}$ is a strictly increasing sequence of natural
numbers, then it is known that the set $S=\{n_{j}-n_{i}: \: i,j\in \mathbb{N},\: i<j\}$
is a set of recurrence. A more sophisticated example is the following (cf. 
{\cite[Proposition 1.22]{Berg-Les}}): if $f$ is a (non-zero) polynomial with coefficients 
in $\N_0$ and zero constant term, then the sets $D_1=\{f(p-1):p\in \mathbb{P}\}$ and 
$D_2=\{f(p+1):p\in \mathbb{P}\}$
are sets of recurrence ($\mathbb{P}$ denotes the set of prime 
numbers).
	
The notion of sets of recurrence has deep connections with combinatorics and number theory,
as illustrated by \cref{equiv.1}. To state this theorem, we need the notion of upper 
density: For a set $S\subseteq \mathbb{N}$, we define its upper density as 
$\overline{d}(S):=\limsup_{n\to \infty} {\frac{1}{n} |S\cap \{1,2,...,n\} }|$.
	
\begin{theorem}[Cf. {\cite[pages 3-4 and Theorem 3.1]{Berg-Les}}]\label{equiv.1}
Let $D\subseteq \mathbb{N}$. Then the following are equivalent:
\begin{enumerate}
    \item D is a set of recurrence.
    \item D is intersective, i.e. for any $S\subseteq \mathbb{N}$ with
    $\overline{d}(S)>0$, there exist $x, y \in S$  such that $x-y \in D$, or
    equivalently, for any $S\subseteq \N$ with $\overline{d}(S)>0$, we have 
    $(S-S)\cap D \neq \emptyset$.
    \item For any $S\subseteq \mathbb{N}$ with $\overline{d}\big(S\big)>0$, there exists
    $n\in D$ such that $\overline{d}\big(S\cap\big(S-n\big)\big)>0$.
    \item For any sequence $\big(u_n\big)_{n\in \mathbb{N}}$ taking values in 
    $\{0,1\}$ and satisfying 
    \[ \lim_{N\to \infty} \frac{1}{N} \sum_{n=1}^{N} u_{n+h} u_{n} =0
    \:\:\text{  for every }h\in D\]
    we have that 
    \[ \lim_{N\to \infty} \frac{1}{N} \sum_{n=1}^{N} u_n=0. \]
\end{enumerate}
\end{theorem}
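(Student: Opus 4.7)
My plan is to prove the equivalences by establishing the cycle (1) $\Rightarrow$ (3) $\Rightarrow$ (2) $\Rightarrow$ (1) and then folding in (4) via (1) $\Rightarrow$ (4) $\Rightarrow$ (2). The central tool in both halves is the Furstenberg correspondence principle, which, given $S \subseteq \N$ with $\overline{d}(S) > 0$, produces a measure-preserving system $(X, \mathcal{A}, \mu, T)$ and a set $A \in \mathcal{A}$ such that $\mu(A) \ge \overline{d}(S)$ and
\[
\mu(A \cap T^{-n} A) \le \overline{d}\big(S \cap (S - n)\big) \quad \text{for every } n \in \N,
\]
with the subsequence in the construction chosen so that relevant correlation averages have honest limits equal to the limsup quantities one cares about.

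Two of the implications are essentially free. For (3) $\Rightarrow$ (2), a set of positive upper density is nonempty, so any element of $S \cap (S - n)$ furnishes a pair in $S$ with difference $n \in D$. For (4) $\Rightarrow$ (2), I argue contrapositively: if $S$ witnesses failure of (2), set $u = \mathbf{1}_S$; then $(S - S) \cap D = \emptyset$ forces the correlations $\frac{1}{N}\sum_{n \le N} u_{n+h} u_n$ to vanish identically for each $h \in D$, while $\frac{1}{N}\sum u_n$ does not tend to zero, violating (4). The implications (1) $\Rightarrow$ (3) and (1) $\Rightarrow$ (4) are direct consequences of the correspondence principle: apply it to $S$ (respectively to the support $\{n : u_n = 1\}$), invoke recurrence of $D$ to find some $n \in D$ with $\mu(A \cap T^{-n} A) > 0$, and translate back either to $\overline{d}(S \cap (S - n)) > 0$ or to a contradiction with the vanishing of the correlation along the subsequence used in the construction.

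The main work lies in (2) $\Rightarrow$ (1), which I would handle by contrapositive using generic orbits. Suppose $D$ fails to be a set of recurrence, witnessed by some measure-preserving system with $\mu(A) > 0$ but $\mu(A \cap T^{-n} A) = 0$ for all $n \in D$. By Birkhoff's pointwise ergodic theorem applied to $\mathbf{1}_A$, the set of $x$ for which $S_x := \{n \in \N : T^n x \in A\}$ has $\overline{d}(S_x) > 0$ has positive $\mu$-measure. On the other hand, the \emph{bad set}
\[
B := \bigcup_{n \in D} \bigcup_{m \ge 0} T^{-m}\big(A \cap T^{-n} A\big)
\]
is a countable union of null sets, so $\mu(B) = 0$. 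For any $x \notin B$, the existence of $m_1 < m_2$ in $S_x$ with $m_2 - m_1 = n \in D$ would place $T^{m_1} x$ in $A \cap T^{-n} A$ and hence $x$ in $B$, a contradiction; thus $(S_x - S_x) \cap D = \emptyset$. Choosing $x$ in the positive-measure intersection of the two good events then yields an $S_x$ of positive upper density avoiding all differences in $D$, contradicting (2).

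I expect the main obstacle to be the careful setup of the Furstenberg correspondence: selecting subsequences $(N_k)$ so that the density of $S$ and the relevant correlation averages all converge, and verifying via a weak-$*$ compactness argument on the shift space that the resulting invariant measure satisfies the required inequality. Once this machinery is in hand, the remaining steps are short and mostly bookkeeping.
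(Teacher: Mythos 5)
The paper states this theorem with a citation to Bergelson--Lesigne and does not prove it, so there is no in-paper argument to compare against; I will assess your proposal on its own.

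Your cycle $(1)\Rightarrow(3)\Rightarrow(2)\Rightarrow(1)$ together with $(1)\Rightarrow(4)\Rightarrow(2)$ does connect all four conditions, and each arrow is correct. The easy implications are fine as stated: for $(3)\Rightarrow(2)$ a set of positive upper density is nonempty, and for $(4)\Rightarrow(2)$, if $(S-S)\cap D=\emptyset$ then $S\cap(S-h)=\emptyset$ for all $h\in D$, so the $D$-correlations of $u=\mathbf 1_S$ vanish identically while $\limsup_N\frac1N\sum_{n\le N}u_n=\overline d(S)>0$, so the Ces\`aro limit cannot be $0$. For $(1)\Rightarrow(3)$ and $(1)\Rightarrow(4)$, the Furstenberg correspondence principle does exactly what you say once the averaging sequence $(N_k)$ is chosen, by a diagonal argument, so that $\mu(A)=\overline d(S)$ and $\mu(A\cap T^{-n}A)$ is bounded above by the corresponding subsequential density of $S\cap(S-n)$; when the full correlation limit is $0$ every subsequential value is $0$ as well, so this gives the contradiction you want. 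Your $(2)\Rightarrow(1)$ via Birkhoff and the null set $B=\bigcup_{n\in D}\bigcup_{m\ge 0}T^{-m}(A\cap T^{-n}A)$ is a clean way to produce a generic point $x$ with $\overline d(S_x)>0$ and $(S_x-S_x)\cap D=\emptyset$, and it works for an arbitrary measure-preserving system without any separability reduction.

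One cosmetic remark: you invoke the correspondence principle twice, once for $(1)\Rightarrow(3)$ and once for $(1)\Rightarrow(4)$, but $(3)\Rightarrow(4)$ is in fact elementary. If all the $D$-correlation limits of $u=\mathbf 1_S$ vanish, then $\overline d\big(S\cap(S-h)\big)=0$ for every $h\in D$, so the contrapositive of $(3)$ yields $\overline d(S)=0$; since $u_n\ge 0$ this already forces $\frac1N\sum_{n\le N}u_n\to 0$. Thus the single cycle $(1)\Rightarrow(3)\Rightarrow(4)\Rightarrow(2)\Rightarrow(1)$ suffices and uses the correspondence only once. This is a simplification, not a gap.
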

	
Later, Kamae and Mend\'es France in \cite{Kam-MF} introduced the notion of van der Corput
(vdC) sets in connection with the theory of uniform distribution on $\T$. An equivalent 
definition of vdC sets, given by Ruzsa in \cite{Ruz}, is the following:
	
\begin{definition}\label{def2}
A set $D\subseteq \N$ is a vdC set if the following holds: If $\sigma$ is a non-negative 
finite measure on $\mathbb{T}$ such that the Fourier transform of $\sigma$ vanishes on $D$
(i.e. $\widehat{\sigma}(h):=\int_\T e^{-2\pi i ht}\diff \sigma(t)=0$ for every $h\in D$),
then $\sigma$ is continuous (i.e. $\sigma(\{t\})=0$ for every $t\in \T$).
\end{definition}
	
Van der Corput sets also connect with number theory, as illustrated by the following 
theorem of Bergelson and Lesigne:
\begin{theorem}[Cf. {\cite[Definition 2 and Theorem 1.8]{Berg-Les}}]\label{sun6}
A set $D\subseteq \mathbb{N}$ is a vdC set if and only if the following holds: Whenever a 
sequence $(u_n)_{n\in \mathbb{N}}$ of complex numbers with modulus 1 satisfies that
\[ \lim_{N\to \infty} \frac{1}{N} \sum_{n=1}^{N} u_{n+h} \overline{u_n} =0\:\: 
\text{ for every } h\in D, \]
then we have 
\[ \lim_{N\to \infty} \frac{1}{N} \sum_{n=1}^{N} u_n =0. \]
\end{theorem}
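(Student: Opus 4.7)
My plan is to prove both implications of Theorem~\ref{sun6} by passing between the dynamical hypothesis on $(u_n)$ and spectral measures on $\T$ via the Herglotz--Bochner theorem. For the forward implication, assume $D$ is a vdC set and let $(u_n)$ satisfy $|u_n|=1$ and $\lim_N \frac{1}{N}\sum_{n=1}^N u_{n+h}\overline{u_n}=0$ for every $h \in D$. Suppose for contradiction that $\frac{1}{N}\sum_{n=1}^N u_n$ does not tend to $0$; by compactness extract a subsequence $(N_k)$ along which $c:=\lim_k \frac{1}{N_k}\sum_{n=1}^{N_k}u_n$ exists and is nonzero, and refine once more by a diagonal Bolzano--Weierstrass argument so that $\gamma_h := \lim_k \frac{1}{N_k}\sum_{n=1}^{N_k} u_{n+h}\overline{u_n}$ exists for every $h \in \N_0$ (extend by $\gamma_{-h}=\overline{\gamma_h}$). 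The sequence $(\gamma_h)$ is positive definite on $\Z$ as a limit of empirical positive definite sequences, so Herglotz--Bochner produces a positive finite Borel measure $\sigma$ on $\T$ with $\widehat{\sigma}(h) = \gamma_h$; the hypothesis then gives $\widehat{\sigma}|_D = 0$, so the vdC property forces $\sigma$ to be continuous.

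The contradiction comes from the \emph{mean subtraction} trick: set $v_n := u_n - c$. A direct expansion shows that along the same subsequence, $\frac{1}{N_k}\sum_{n=1}^{N_k} v_{n+h}\overline{v_n} \to \gamma_h - |c|^2$ for every $h$, so $(\gamma_h - |c|^2)_{h \in \Z}$ is positive definite. Applying Herglotz--Bochner once more and comparing Fourier coefficients shows that the signed measure $\sigma - |c|^2 \delta_0$ is in fact nonnegative; in particular $\sigma(\{0\}) \geq |c|^2 > 0$, contradicting continuity of $\sigma$.

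For the reverse implication, assume the averaging statement holds and suppose $D$ fails to be a vdC set. Then there is a positive finite measure $\sigma$ on $\T$ with $\widehat{\sigma}|_D = 0$ and $\sigma(\{\alpha\})>0$ for some $\alpha \in \T$; pushing $\sigma$ forward by $t \mapsto t - \alpha$ multiplies $\widehat{\sigma}(h)$ by $e^{-2\pi i h \alpha}$ and hence preserves $\widehat{\sigma}|_D = 0$, so I may assume $\sigma(\{0\})>0$. The goal is to realize $\sigma$ as the correlation measure of a concrete modulus-one sequence: I would choose an ergodic measure-preserving system $(X,\mathcal{B},\mu,T)$ and an $f \colon X \to S^1$ with spectral measure $\sigma_f = \sigma$, and set $u_n := f(T^n x)$ for $\mu$-almost every $x$. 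Birkhoff's ergodic theorem applied to appropriate product functions yields $\frac{1}{N}\sum u_{n+h}\overline{u_n} \to \widehat{\sigma}(h)$ (hence $0$ for $h \in D$) and $\frac{1}{N}\sum u_n \to \int f\,d\mu$; but $\bigl|\int f\,d\mu\bigr|^2 = \sigma_f(\{0\}) = \sigma(\{0\}) > 0$, so $(u_n)$ violates the averaging statement, a contradiction.

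The main obstacle is the realization step in the reverse direction: producing an ergodic measure-preserving system and an $S^1$-valued function whose spectral type equals a prescribed positive measure on $\T$. This can be carried out either abstractly via the spectral representation of unitary operators on $L^2(\T,\sigma)$ together with a phase-preserving lift, or concretely via a Gaussian--Kronecker-type construction. The forward direction, by contrast, is essentially clean Herglotz--Bochner combined with the mean-subtraction inequality $\sigma(\{0\}) \geq |c|^2$.
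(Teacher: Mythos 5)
The paper does not prove \cref{sun6}; it is stated as a citation to Bergelson--Lesigne, so there is no internal proof to compare your argument against. Evaluating it on its own merits: your forward implication is correct and is the standard argument. The diagonal subsequence extraction, the observation that $(\gamma_h)$ is positive definite (up to an $O(h/N_k)$ boundary error in the defining quadratic form), the Herglotz--Bochner step, and the mean-subtraction computation showing that $(\gamma_h-|c|^2)_h$ is again positive definite and hence $\sigma-|c|^2\delta_0\geq 0$, are all sound.

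The reverse implication has a genuine gap at the realization step, which you correctly flag as the main obstacle but do not resolve. You need an ergodic measure-preserving system $(X,\mu,T)$ and an $f\colon X\to S^1$ whose spectral measure is the given $\sigma$. Neither proposed route clearly produces this. The spectral representation of a unitary with cyclic vector on $L^2(\T,\sigma)$ yields a unitary operator, not a Koopman operator of an actual measure-preserving transformation, and there is no standard ``phase-preserving lift'' that turns one into the other while keeping the cyclic vector $S^1$-valued. The Gaussian (or Gaussian--Kronecker) construction does give a system with prescribed spectral type on the first chaos, but the function realizing it is Gaussian-distributed, not unimodular, and passing to $f/|f|$ or $e^{2\pi i f}$ distorts the spectral measure nonlinearly. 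Ergodicity is an additional constraint on top of all this. So as written, the reverse direction is not complete.

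The way this is actually handled in the literature is by a direct block-concatenation construction at the level of sequences, bypassing dynamics entirely: after translating so $\sigma(\{0\})=a>0$, approximate $\sigma$ by discrete probability measures $\sigma_M=\sum_j a_j^{(M)}\delta_{\alpha_j^{(M)}}$ (keeping $\delta_0$ with weight $a$ as one of the atoms), and build $(u_n)$ by concatenating long blocks on which $u_n=e^{2\pi i n\alpha_j^{(M)}}$, with the frequency $\alpha_j^{(M)}$ used on a set of blocks of asymptotic density $a_j^{(M)}$. Choosing block lengths to grow suitably, one gets $\frac{1}{N}\sum_{n\leq N}u_{n+h}\overline{u_n}\to\widehat\sigma(h)$ for every $h$ (so the hypothesis of the averaging statement holds), while $\frac{1}{N}\sum_{n\leq N}u_n$ has $\limsup$ at least $a>0$, giving the contradiction. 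You should either carry out this construction carefully or cite Bergelson--Lesigne for it.
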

	
Kamae and Mend\'es France derived from the classical van der Corput Lemma 
\cite{vdCorput31} in uniform distribution that $\N$ is a van der Corput set 
(which is why they gave this name to this class of sets). 
In fact, in \cite{Kam-MF} they proved that for any $k\in \N$, 
the set $k\N=\{kn:n\in \N\}$ is a van der Corput set. 
In addition, in the same paper they show that if $(n_{i})_{i\in \mathbb{N}}$ 
is a strictly increasing sequence of natural numbers, 
then the set $S=\{n_{j}-n_{i}: \: i,j\in \mathbb{N},\: i<j\}$ is a van der Corput set.
Moreover, the examples $D_1$ and $D_2$ described above are van der Corput sets 
(see {\cite[Proposition 1.22]{Berg-Les}}).
	
One may observe that the examples of van der Corput sets given above are also examples of 
sets of recurrence. As Kamae and Mend\'es France showed in {\cite[Theorem 2]{Kam-MF}}, 
every vdC set is a set of recurrence. In fact, when showing that a given set is a set of 
recurrence, one often implicitly shows that it also a vdC set. This raises the natural 
question of whether the opposite is true as well, i.e. is every set of recurrence also a vdC
set? It turns out that the answer is negative, as was shown by Bourgain in the following 
theorem:
	
\begin{theorem}[Cf. \cite{Bou}]\label{bthm}
There is a set $R\subseteq \N$ which is a set of recurrence but not a vdC set.
\end{theorem}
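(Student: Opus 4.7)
The plan is to exploit the Fourier-analytic descriptions of both notions. By \cref{def2}, to exhibit a set $R$ that fails to be a vdC set it suffices to produce a finite non-negative measure $\sigma$ on $\mathbb{T}$ with at least one atom and $\hat{\sigma}(n)=0$ for every $n\in R$. On the recurrence side, applying the spectral theorem to the Koopman operator of a measure-preserving system yields a dual description: $R$ is a set of recurrence if and only if there is no finite positive measure $\nu$ on $\mathbb{T}$ with $\nu(\{0\})=0$ and no constant $c>0$ for which $\hat{\nu}(n)\le -c$ for every $n\in R$. Thus the task reduces to constructing an $R$ that admits an atomic-plus-continuous witness of the first kind while blocking every ``recurrence-obstructing'' measure of the second kind.

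My approach would be a probabilistic construction, in the spirit of Bourgain's original argument. Fix a target measure $\sigma_0 = a\,\delta_{t_0} + \sigma_c$, where $\sigma_c$ is a continuous positive measure carefully built as a generalised Riesz product over a lacunary (or random) sequence of frequencies, and let $R$ be a suitable subset of the integer zero set of $\hat{\sigma_0}$. The atom in $\sigma_0$ immediately disqualifies $R$ as a vdC set. To ensure $R$ is also a set of recurrence, one works through the spectral criterion above and argues, via concentration and $L^p$ Fourier estimates of Salem--Zygmund type, that no positive measure $\nu$ supported on $\mathbb{T}\setminus\{0\}$ can have $\mathrm{Re}\,\hat{\nu}$ bounded above by a fixed $-c<0$ on $R$. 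A union bound over a sufficiently rich net of candidate measures $\nu$ then upgrades this into the required property almost surely, and equivalence (2) in \cref{equiv.1} translates it back to the combinatorial formulation.

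The main obstacle is verifying recurrence for the random $R$: there is no direct combinatorial handle on the zero set of a Fourier transform, so one must balance two competing demands. On one hand, $R$ has to be ``rigid'' enough that a measure with an atom can annihilate it; on the other, it must be ``generic'' enough that no $\nu$ on $\mathbb{T}\setminus\{0\}$ has uniformly negative Fourier coefficients on $R$. Coordinating the random parameters (frequencies, gaps, decay rates for $\sigma_c$) so that both properties hold simultaneously, together with the quantitative Fourier analysis needed to make the union bound effective against a continuum of candidate measures $\nu$, is the technical heart of the proof; the production of the atomic-plus-continuous witness $\sigma_0$ is by comparison the easy step.
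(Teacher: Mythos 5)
Your spectral reformulation of recurrence is essentially right, but you have the difficulty gradient backwards, and that is a genuine gap. You call producing the atomic-plus-continuous witness $\sigma_0=a\,\delta_{t_0}+\sigma_c$ ``by comparison the easy step''. It is not: if $\sigma_c$ is a generalised Riesz product over a lacunary sequence $(n_j)$, then $\widehat{\sigma_c}$ is supported on the sparse set $\bigl\{\sum_j \varepsilon_j n_j:\varepsilon_j\in\{-1,0,1\}\bigr\}$, so off that set $\widehat{\sigma_0}(n)=a\,e^{-2\pi i n t_0}$ never vanishes, and any candidate $R$ is confined to the thin support of $\widehat{\sigma_c}$. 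There you need the exact equality $\widehat{\sigma_c}(n)=-a\,e^{-2\pi i n t_0}$; for the classical Riesz product with nonnegative coefficients and $t_0=0$ one has $\widehat{\sigma_c}(n)\ge 0$ on its support, so $\widehat{\sigma_0}$ has no zeros at all. Hitting $-a$ exactly on a set rich enough to force recurrence is the technical heart of the problem, and a ``carefully built'' product over ``lacunary or random'' frequencies does not deliver it. The paper (and Bourgain's original argument, which is constructive and finitary, not probabilistic) does this with a bespoke deterministic construction: $\sigma$ is a convolution of explicit measures $\sigma_k$ on $Q^{k+1}$-th roots of unity, each engineered in \cref{ldl1} (via \cref{ldl2} and \cref{polcr}) so that $\widehat{\sigma_k}$ is periodic and equals $-1$ on a prescribed window, whence $\widehat{\sigma}(y)=-1$ precisely when the base-$Q$ digits of $y$ satisfy the conditions of \cref{comblem}; adding $\delta_0$ and normalising then annihilates the Fourier transform on exactly that structured set.

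Your recurrence verification is also incomplete in a way the paper avoids. You propose to rule out every obstruction measure $\nu$ by a union bound over ``a sufficiently rich net'', but the condition $\sup_{n\in R}\mathrm{Re}\,\widehat{\nu}(n)\le -c$ over an infinite $R$ is not weak$^\ast$-continuous, so compactness does not hand you a finite net, and you offer no mechanism to produce one. The paper sidesteps this entirely: rather than the spectral criterion, it verifies recurrence combinatorially through the finitary notion of $\bigl(\tfrac1j\bigr)$-recurrence, using the Hamming-cube-style \cref{comblem1} together with the quantitative Poincar\'e lemma \cref{PRT} to show that every dense subset of $[Q^P]$ has a difference with the required digit pattern, and hence lies in the zero set of $\widehat{\mu}$. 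The reduction to finite statements (\cref{wed9}) followed by the weak$^\ast$-limit construction of \cref{hldt} is what makes the infinite set $R$ exist and inherit both properties; that reduction, which is the organising idea of the whole argument, is absent from your proposal.
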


Bourgain's argument in the proof of \cref{bthm} is constructive and finitary, and 
it strongly inspired the work presented in this article.

Following \cite{Berg-Les}, a natural strengthening of the notion 
of sets of recurrence is 
the following definition of sets of strong recurrence:
	
\begin{definition}[Cf. {\cite[Definition 5]{Berg-Les}}]
An infinite set $D\subseteq \mathbb{N}$ is a set of strong recurrence if, given any measure
preserving system $(X,\mathcal{A},\mu,T)$ and any set $A\in \mathcal{A}$ with $\mu(A)>0$, 
we have that 
\[ \limsup_{\substack{m\to \infty\\m\in D}} \mu \big(A\cap T^{-m}A\big)>0. \]
\end{definition}
From the definitions, it is obvious that any set of strong recurrence 
is also a set of recurrence. Forest in \cite{Fo} gave an 
example of a set of
recurrence which is not a set of strong recurrence, 
thereby showing that sets of strong recurrence form a proper 
subclass of the class of sets of recurrence.
	
In the same way sets of strong recurrence provide a quantitative strengthening of sets of 
recurrence, following \cite{Berg-Les}, we also have the following quantitative 
strengthening of van der Corput sets:
	
\begin{definition}[Cf. {\cite[Definition 4 and Theorem 2.1]{Berg-Les}}]
An infinite set $D\subseteq \N$ is called an enhanced van der Corput set if it satisfies 
the following:
Whenever $\sigma$ is a non-negative finite measure on $\T$ such that
\[ \lim_{\substack{d\to \infty\\ d\in D}} \widehat{\sigma}(d)=0\]
then $\sigma$ is continuous (i.e. $\sigma(\{t\})=0$ for every $t\in \T$).
\end{definition}
	
Again from the definitions, it is clear that every enhanced vdC set is also a vdC set. On 
the other hand, all the examples of vdC sets given before, serve also as examples of 
enhanced vdC sets. This gives rise to the interesting question of whether the classes of vdC
sets and enhanced vdC sets coincide, or if there is a vdC set which is not an enhanced vdC 
set.
	
Parallel to the case of vdC sets and sets of recurrence, 
it is known that every enhanced vdC set is a set of strong recurrence 
(see {\cite[Proposition 3.5]{Berg-Les}}). Hence we derive that the examples 
of vdC sets given before, which as remarked are also enhanced vdC sets, 
serve as examples of sets of strong recurrence as well. 
In \cite{Berg-Les}, Bergelson and Lesigne asked the following questions:
\begin{question}\label{q1}
Is every set of strong recurrence an enhanced vdC set?
\end{question}
\begin{question}\label{q2}
Is there any inclusion between the collection of sets of strong recurrence and the 
collection of vdC sets?
\end{question}
	
Since Bourgain's proof of \cref{bthm} was rather involved, \cref{q1} was commented as 
``perhaps quite difficult'' by Bergelson and Lesigne. In this paper, we establish the 
following theorem, which addresses these questions:
	
\begin{theorem}\label{mythm}
There is a set $R\subseteq \mathbb{N}$ which is a set of strong recurrence but not a van 
der Corput set.
\end{theorem}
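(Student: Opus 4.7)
The plan is to construct $R$ explicitly by a recursive finitary procedure in the spirit of Bourgain's proof of \cref{bthm}, strengthened so as to yield strong recurrence rather than merely recurrence. More precisely, I would construct $R = \{n_1 < n_2 < \dots\}$ together with a non-negative finite measure $\sigma$ on $\T$ satisfying $\widehat{\sigma}(n_k) = 0$ for every $k \in \N$ and $\sigma(\{\theta_0\}) > 0$ for some $\theta_0 \in \T$. By \cref{def2}, such a measure $\sigma$ witnesses that $R$ is not a van der Corput set, while the combinatorial properties of $R$ will be arranged in parallel to force strong recurrence.

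For the construction of $\sigma$, the natural ansatz is $\sigma = c\,\delta_{\theta_0} + \mu$, where $c > 0$ and $\mu$ is an auxiliary non-negative continuous measure whose Fourier coefficients cancel those of $c\,\delta_{\theta_0}$ at each $n_k$. At stage $k$, given $n_1,\dots,n_k$ and a partial measure $\sigma_k$ with $\widehat{\sigma_k}(n_j) = 0$ for $j \le k$, the next element $n_{k+1}$ is chosen large enough that a small non-negative perturbation of $\sigma_k$ (say a low-weight continuous bump supported far from $\theta_0$ in Fourier space) cancels $\widehat{\sigma_k}(n_{k+1})$ without undoing earlier cancellations and while preserving a definite amount of mass at $\theta_0$. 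A weak-$*$ limit argument then produces the target $\sigma$, which is non-continuous by construction.

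To guarantee strong recurrence, I would invoke the spectral theorem: for any measure-preserving system $(X,\mathcal{A},\mu,T)$ and any $A \in \mathcal{A}$ with $\mu(A) > 0$, one has $\mu(A \cap T^{-n}A) = \widehat{\tau_A}(n)$ where $\tau_A$ is the spectral measure of $1_A$, and $\tau_A$ carries a positive atom at $0$ (of mass $\mu(A)^2$ in the ergodic case; the general case reduces to this via the ergodic decomposition). Thus \cref{mythm} reduces to showing that for every non-negative finite measure $\tau$ on $\T$ with $\tau(\{0\}) > 0$, one has $\limsup_{n \in R,\, n \to \infty} \widehat{\tau}(n) > 0$. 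Accordingly, at every stage of the inductive construction I would insert, in addition to the ``vanishing'' element $n_{k+1}$ above, a batch of further integers chosen to be ``universal sampling times'' at an appropriately large scale, so that $\widehat{\tau}(n)$ is close to $\tau(\{0\})$ for $n$ in this batch, uniformly over a sufficiently dense family of test measures $\tau$.

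The main obstacle is reconciling these two competing constraints. The non-vdC requirement pushes $R$ to be sparse and to avoid integers at which $|\widehat{\sigma}(n)|$ is far from zero, whereas strong recurrence requires $R$ to contain, at arbitrarily large scales, many integers that serve as near-universal sampling times for Fourier transforms of measures with an atom at $0$. Bourgain's construction already balances such tensions in the weaker recurrence setting by adding a single element per stage; the challenge here is to add sufficiently many new elements per stage to control the \emph{limsup} in the strong recurrence definition, while preserving enough analytic freedom to continue the cancellation of $\widehat{\sigma}$ on the ever-growing set. I expect this delicate bookkeeping, rather than the existence of $\sigma$, to be the crux of the argument, which is consistent with Bergelson and Lesigne's description of \cref{q1} as ``perhaps quite difficult''.
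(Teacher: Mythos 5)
Your spectral-theoretic reduction is too strong and is in fact self-defeating. You propose to establish strong recurrence by showing that for every non-negative finite measure $\tau$ on $\T$ with $\tau(\{0\})>0$ one has $\limsup_{n\in R,\,n\to\infty}\widehat{\tau}(n)>0$. But, up to the standard convolution trick, that statement is precisely the assertion that $R$ is an enhanced van der Corput set: if $\sigma$ is any non-negative finite measure with $\widehat{\sigma}\equiv0$ on $R$ and $\sigma(\{\theta_0\})>0$ for some $\theta_0$, and $\tilde\sigma$ is the reflection of $\sigma$, then $\tau:=\sigma\ast\tilde\sigma$ is non-negative, satisfies $\widehat{\tau}(n)=|\widehat{\sigma}(n)|^2=0$ for every $n\in R$, and has $\tau(\{0\})\ge\sigma(\{\theta_0\})^2>0$, directly contradicting your target. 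In other words, the measure $\sigma$ you build in the first half of your plan to certify that $R$ is not a vdC set would itself refute the statement you aim to prove in the second half. The reduction forgets that the spectral measure of $\mathbbm{1}_A$ in a measure-preserving system is a very special kind of positive-definite measure; strong recurrence is genuinely weaker than the enhanced vdC property, and the whole theorem lives in that gap. Relatedly, the phrase ``uniformly over a sufficiently dense family of test measures'' cannot be made to work: no batch of frequencies $n$ makes $\widehat{\tau}(n)$ close to $\tau(\{0\})$ uniformly over all measures with an atom at $0$, since for any prescribed $n$ one can add a small atom at $\theta$ with $n\theta\approx\frac12\pmod 1$ and drive $\widehat{\tau}(n)$ down to $0$.

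The paper sidesteps spectral theory entirely for the recurrence half. The key is the finitary \cref{wed9}: for every $j$ and every $\epsilon<\frac12$ there is a \emph{finite} set $R_j$ which is a set of $\big(\frac1j\big)$-recurrence (it meets $E-E$ for every $E\subseteq[n]$ with $|E|>n/j$, for some fixed $n$) but not an $\epsilon$-vdC set. These finite pieces are then embedded into $\N$ as dilated copies $2N_jR_j$ at widely separated scales, and the non-continuous measure $\rho$ witnessing failure of the vdC property is obtained as a weak-$\ast$ limit of a Riesz-product--like sequence $c_j=\prod_{\ell\le j} b_\ell(2N_\ell\cdot)$ of dilated positive trigonometric polynomials, where each $b_\ell$ is cooked up from the $\epsilon'$-vdC-defeating measure for $R_\ell$. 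Strong recurrence is then proved combinatorially, not spectrally: given a m.p.s.\ $(X,\mathcal A,\mu,S)$ and $A$ with $\mu(A)>0$, one looks at the finite orbit codes $E(x)=\{n\in[\kappa_i]:\,S^{2N_jn}x\in A\}$, uses Markov plus pigeonhole to find a single code $E$ shared by a set of measure at least $\frac1{i2^{\kappa_i}}$, and then applies the $\big(\frac1i\big)$-recurrence of $R_j$ to that $E$; this is repeated for infinitely many $j$ at growing scales to control the $\limsup$. That finitary route is what lets strong recurrence coexist with failure of the vdC property, and it is the idea your proposal is missing.
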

	
This theorem (whose proof will be presented in \cref{mythm proof}) provides a negative 
answer to \cref{q1} (the set $R$ is a set of strong recurrence, 
but not a vdC set, and thus it is not an enhanced vdC set either). 
In addition, it partially answers \cref{q2}, as it shows that not every set of strong 
recurrence is a vdC set. 
It remains an interesting question whether every vdC set is a 
set of strong recurrence.
	
The paper is organized as follows. In \cref{fourier stuff}, we collect some 
background from Fourier Analysis that is needed in this paper. 
\cref{mythm proof} contains the proof of \cref{mythm}. The proof hinges on a finitistic 
result, \cref{wed9}, which is then proved in \cref{finlem proof}.
\bigskip

\noindent
\textbf{Acknowledgements.} Special thanks go to 
my advisor, Joel
Moreira, for his helpful guidance and beneficial 
comments throughout the writing of 
this paper. In addition, the author would like
to thank the referee for 
their valuable comments and suggestions.

The author is supported by the Warwick Mathematics 
Institute Centre for Doctoral Training, and gratefully 
acknowledges funding by University of Warwick’s 
Chancellors' International Scholarship scheme.
	
\section{Tools from Fourier Analysis}\label{fourier stuff}
In this section, we collect notation, terminology and results from Fourier Analysis 
that are needed in later sections. Most of the material is well known and is presented 
for the convenience of the reader. Recall that $\lambda$ denotes the Lebesgue measure 
on $\T$. If $f$ is a function in $L^1\big(\T, \lambda)$, then we denote by 
$f \diff \lambda$ the measure whose Radon-Nikodym derivative 
(with respect to $\lambda$) is $f$.
	
For a function $f\in L^1\big(\T, \lambda)$, its Fourier transform is 
$\widehat{f}: \Z \to \C,\:$  
$\widehat{f}(k)
=
\int_{\T} f(t)e^{-2\pi i kt} \diff \lambda(t).$ 
We record the following classical 
property of the Fourier transform for later use:
If $f,g \in L^2\big(\T, \lambda)$, then $fg\in L^1\big(\T, \lambda)$ and 
\begin{equation}\label{prop}
\text{ for every } k\in \Z,\:\:
\widehat{fg}\:(k)
=
\sum_{m\in \Z} \widehat{f}(m) \widehat{g}(k-m).    
\end{equation}
If $\sigma$ is a bounded complex valued measure on the torus $\T$, then its 
Fourier transform is the function:
$\widehat{\sigma} : \mathbb{Z} \to \mathbb{C}, 
\:\: \widehat{\sigma}(k)=\int_{\T} e^{-2\pi i kt} \diff \sigma(t).$
It is well known (see for example {\cite[Section I.7]{Katz}}) that the Fourier 
transform $\widehat\sigma$ uniquely determines the measure $\sigma$.
	
Recall the notion of weak$^\ast$ convergence of probability measures: 
If $(\mu_n)_{n\in \mathbb{N}}$, $\mu$ are probability measures on $\mathbb{T}$, 
then $\mu_n\to \mu$ in the weak$^\ast$ sense as $n\to \infty$ if for every 
continuous function $f$ on $\T$, we have $\int_\T f \diff \mu_n \to \int_\T f \diff \mu$,
as $n\to \infty$.
	
\begin{remark}\label{rmk1}
For $(\mu_n)_{n\in \mathbb{N}}$, $\mu$ probability measures on $\mathbb{T}$, 
it is known that
$\mu_n\to \mu$ in the weak$^\ast$ sense as $n\to \infty $ is equivalent to 
$\lim_{n\to \infty} \widehat{\mu_n}(k)=\widehat{\mu}(k)$ for every $k\in \mathbb{Z}$.
\end{remark}
	
Let us now recall the definition of the convolution of two measures on $\T$. 
If $\mu,\nu$ are non-negative finite measures on $\T$, their convolution $\mu \ast \nu$ 
is the non-negative finite measure on $\T$ characterised by the property that 
\[ \int_{\T} f(t) \diff (\mu \ast \nu)(t)=\int_{\T^2} f(x+t) \diff \mu (x) 
\diff \nu (t)\:\text{  for every }f \in C(\T).\]
Identifying a function $f$ with the measure $f\diff \lambda$, 
it makes sense to take the convolution of two functions and 
the convolution of a function with a measure. 
The operation of convolution is associative and commutative. 
Furthermore, for all $k\in \Z$, the following identity holds
\begin{equation}\label{prop1}
\widehat{\mu\ast\nu}\:(k)=\widehat{\mu}(k)\widehat{\nu}(k).
\end{equation}
The same identity holds for convolution of functions, as well as for convolution of 
a function with a measure.
	
For $n\in \N$, the classical Dirichlet and Fejer kernels are the functions 
$D_n,\:F_n :\T \to \C$, whose formulas are given by:
\[D_n(t)=\sum_{k=-n}^{n} e^{2\pi i kt}, \qquad F_n(t)=\sum_{k=-n+1}^{n-1} 
\Bigg(1-\frac{|k|}{n}\Bigg)e^{2\pi i kt}=\frac{1}{n}
\Bigg(\frac{\sin(\pi n t)}{\sin(\pi t)}\Bigg)^2.\]

It is not difficult to verify that $0\leq F_n \leq n$ and $F_n (t)F_m(nt)=F_{nm}(t)$.
A property of the Fejer kernel that we will need is the following:
\begin{lemma}\label{ldl2}
Let $R$ and $L$ be positive integers and let $f(t)=\sum_{|m|\leq RL} a_me^{2\pi i mt}$ be 
a trigonometric polynomial. Assume that $f(t)\geq 0$ for all $t\in \T$. Then, 
$f\leq 4Rf \ast F_L$.
\end{lemma}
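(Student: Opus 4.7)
The plan is to combine two factorizations --- Fej\'er-Riesz for $f$ and the square representation $F_L(t) = \frac{1}{L}|K_L(t)|^2$ of the Fej\'er kernel, where $K_L(t) := \sum_{j=0}^{L-1} e^{2\pi i j t}$ --- and then finish with a single application of Cauchy-Schwarz. Since $f \geq 0$ has degree $\leq RL$, the Fej\'er-Riesz theorem produces a one-sided polynomial $q(t) = \sum_{k=0}^{N} b_k e^{2\pi i k t}$ with $N \leq RL$ satisfying $f = |q|^2$. Substituting both factorizations,
\[
(f \ast F_L)(t) = \frac{1}{L}\int_\T |q(t-s)\,K_L(s)|^2 \diff\lambda(s).
\]

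Next I would expand $q(t-s)K_L(s)$ as a trigonometric polynomial in $s$ and apply Parseval. Setting $p_\ell(t) := \sum_{j=0}^{L-1} b_{j+\ell}\, e^{2\pi i j t}$ for $\ell \in \Z$ (with the convention $b_k := 0$ for $k \notin [0,N]$), a direct computation identifies the Fourier coefficient in $s$ at frequency $-\ell$ of $q(t-s)K_L(s)$ as $e^{2\pi i \ell t} p_\ell(t)$, and Parseval yields
\[
(f \ast F_L)(t) = \frac{1}{L}\sum_{\ell=-L+1}^{N}|p_\ell(t)|^2.
\]
On the other hand, $p_\ell(t)\,e^{2\pi i \ell t} = \sum_{k=\ell}^{\ell+L-1} b_k\, e^{2\pi i k t}$ is a length-$L$ partial sum of $q(t)$, and each monomial $b_k e^{2\pi i k t}$ ($0 \leq k \leq N$) appears in exactly $L$ of these partial sums as $\ell$ ranges over $[-L+1,N]$. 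Therefore
\[
\sum_{\ell=-L+1}^{N} p_\ell(t)\, e^{2\pi i \ell t} = L\, q(t),
\]
a sum of $N + L$ terms.

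Cauchy-Schwarz applied to this identity gives $L^2|q(t)|^2 \leq (N+L)\sum_\ell |p_\ell(t)|^2 = L(N+L)(f \ast F_L)(t)$, hence
\[
f(t) = |q(t)|^2 \leq \frac{N+L}{L}(f \ast F_L)(t) \leq (R+1)(f \ast F_L)(t) \leq 4R(f \ast F_L)(t),
\]
using $N \leq RL$ and $R \geq 1$ in the last two steps (so the constant $4R$ carries considerable slack). The main point requiring care is the Parseval/counting bookkeeping that relates $(f \ast F_L)(t)$ to the sum $\sum_\ell |p_\ell(t)|^2$ of squared sliding windows of $q$, and then the reconstruction identity $\sum_\ell p_\ell e^{2\pi i \ell\, \cdot} = L q$; once these are in place the lemma reduces to one application of Cauchy-Schwarz, so I do not expect any serious obstacle beyond pairing the two factorizations correctly.
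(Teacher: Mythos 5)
Your proof is correct, and it takes a genuinely different route from the paper's. The paper constructs the de la Vall\'ee Poussin-type kernel $K = 2F_{2RL} - F_{RL}$, notes that $\widehat{K}\equiv 1$ on $[-RL,RL]$ so that $f = f\ast K$, and then majorizes $K \leq 2F_{2RL} \leq 4R\,F_L$ via the identity $F_{2RL}(t) = F_L(t)\,F_{2R}(Lt)$ together with the pointwise bound $F_{2R}\leq 2R$; this pushes the inequality entirely onto the kernels and never touches $f$. You instead run a Fej\'er--Riesz factorization $f = |q|^2$ against the square representation $F_L = \frac{1}{L}|K_L|^2$, use Parseval to rewrite $(f\ast F_L)(t)$ as a sum of $N+L$ squared length-$L$ windows of $q$, observe that these windows sum (with the appropriate phases) to $Lq$, and close with Cauchy--Schwarz. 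The paper's argument is shorter and avoids Fej\'er--Riesz, but yours is quantitatively sharper: you obtain $f \leq \frac{N+L}{L}\,f\ast F_L \leq (R+1)\,f\ast F_L$, improving the constant from $4R$ to $R+1$. Your bookkeeping (range of $\ell$ from $-L+1$ to $N$, each $b_k e^{2\pi i k t}$ counted exactly $L$ times) checks out, and $R+1 \leq 4R$ since $R\geq 1$, so the stated inequality follows.
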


\begin{proof}
Consider the function $K:=2F_{2RL}-F_{RL}$ on $\T$, where $F_{2RL},F_{RL}$ denote 
the respective Fejer kernels. Calculating explicitly we have that $\widehat{K}(m)=1$ 
for all $|m|\leq RL$. Therefore, by equation \eqref{prop1} we have that 
$\widehat{f}(m)=\widehat{f\ast K}(m)$ for all $m\in \Z$, which implies that 
$f=f\ast K$. 
Observe 
that $K\leq 2F_{2RL}$ and that 
$ \frac{F_{2RL}(t)}{2RF_{L}(t)}
=
\frac{1}{2R}F_{2R}(Lt) \leq 1$.
Using the previous and the non-negativity of $f$ we obtain that  
$f=f\ast K\leq f\ast 2F_{2RL}\leq 4Rf \ast F_{L}$, as desired.
\end{proof}

We will also need the following proposition:
\begin{proposition}\label{polcr}
If a function $f:\N_0 \to \R$ is non-negative, decreasing and convex
\footnote{The definition of convexity is that 
$ f(b)
\leq
\frac{b-a}{c-a} f(c) + \frac{c-b}{c-a} f(a),$ 
whenever $a\leq b \leq c$.},
and satisfies that $f(\ell)=0$ for some $\ell \in \N_0$, 
then the real trigonometric polynomial $s(t)=\sum_{m\in \Z} f(|m|)e^{2\pi i mt}$
is non-negative.
\end{proposition}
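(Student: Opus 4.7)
The plan is to express $s$ as a non-negative linear combination of Fejér kernels. Since $f$ is non-negative and decreasing with $f(\ell)=0$, we get $f(k)=0$ for all $k\geq \ell$, so the sum defining $s$ is finite and $s$ is a genuine trigonometric polynomial.

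First, I would introduce the second differences
\[ c_n := f(n-1) - 2f(n) + f(n+1), \qquad n\geq 1. \]
Convexity of $f$ on $\N_0$ (in the form stated in the footnote, specialized to $a=n-1$, $b=n$, $c=n+1$) is exactly the assertion that $c_n\geq 0$ for all $n\geq 1$. Since $f(k)=0$ for $k\geq \ell$, only $c_1,\ldots,c_\ell$ can be nonzero.

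Next, I would claim the representation
\[ s(t) = \sum_{n=1}^{\ell} c_n \, n \, F_n(t), \]
where $F_n$ is the Fejér kernel. Because $F_n\geq 0$ and $c_n\geq 0$, this immediately yields $s\geq 0$. To establish the identity, I would compare Fourier coefficients on both sides. The $k$-th Fourier coefficient of the right-hand side equals
\[ \sum_{n>|k|} c_n (n-|k|), \]
using that $\widehat{nF_n}(k) = n-|k|$ if $|k|<n$ and $0$ otherwise. Call this quantity $h(|k|)$ and regard $h$ as a function on $\N_0$. A direct computation of first and second differences of $h$ (a short summation-by-parts) gives
\[ h(j) - 2h(j+1) + h(j+2) = c_{j+1}, \]
matching the second differences of $f$. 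Hence $h - f$ is affine on $\N_0$; since both $h$ and $f$ vanish for indices $\geq \ell$, this affine function is identically zero, so $h(|k|) = f(|k|)$ for every $k\in\Z$. The uniqueness of Fourier coefficients then yields the claimed identity and completes the proof.

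There is no substantial obstacle; the only point requiring care is the book-keeping in the summation-by-parts identification of $h$ with $f$, which is handled cleanly by the ``matching second differences plus matching boundary at infinity'' observation.
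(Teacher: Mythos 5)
Your proof is correct, and at bottom it is the same idea as the paper's: write $s$ as a non-negative combination of (non-negative) Fej\'er kernels, with the non-negativity of the coefficients coming from the convexity of $f$. The difference is purely in the packaging. The paper proceeds by induction on $\ell$: given $f$ with $f(\ell+1)=0$, it subtracts $f(\ell)(\ell+1)\widehat{F_{\ell+1}}$ from $f$, checks that the resulting $g$ is again non-negative, decreasing, convex with $g(\ell)=0$, and applies the induction hypothesis; unrolling this recursion produces exactly your identity
\[
s(t)=\sum_{n=1}^{\ell} c_n\, n\, F_n(t), \qquad c_n=f(n-1)-2f(n)+f(n+1),
\]
since at each inductive step the peeled coefficient $f(\ell)=c_{\ell+1}$ is the top second difference. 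So your argument is the closed-form version of the paper's inductive one. What yours buys is directness and transparency: the Fourier-coefficient verification (second differences match, both sides vanish for indices $\geq\ell$, hence the affine discrepancy is zero) replaces the paper's step of re-verifying that $g$ inherits non-negativity, monotonicity, and convexity, which the paper leaves to the reader anyway. One could quibble that you should note the case $\ell=0$ (empty sum, $f\equiv 0$) separately, but that is immediate, and the rest of the bookkeeping you describe is exactly right.
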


\begin{proof}
For $l=0$, $s(t)=0$ for all $t\in \T$ and the conclusion holds trivially. 
Assume that the proposition holds for some $\ell \in \N_0$, and let $f:\N_0 \to \R$ be 
non-negative, decreasing and convex with $f(\ell +1)=0$. Consider the function 
$g:\N_0\to \R,\:\: g(m)=f(m)-f(\ell) (\ell +1)\widehat{F_{\ell+1}}(m)$, where 
$F_{\ell+1}$ denotes the respective Fejer kernel.

Then it is not difficult to show that $g$ is also non-negative, decreasing, convex 
and $g(\ell)=0$. Therefore, $g$ satisfies the conditions of the induction hypothesis, 
and hence the real trigonometric polynomial $r(t)=\sum_{m\in \Z} g(|m|)e^{2\pi i mt}$ 
is non-negative. Recall that $F_{\ell+1}(t)\geq 0$ for all $t\in \T$, so the trigonometric 
polynomial $r(t)+ f(\ell) (\ell +1) F_{\ell+1} (t)$ is real and non-negative. Calculating,
one sees that $s(t)=r(t)+f(\ell)(\ell+1) F_{\ell+1}$, so $s(t)$ is indeed non-negative 
and this concludes the proof of the Proposition.
\end{proof}

If the Fourier transform of an integrable
function $s:\T \to \C$ satisfies $\widehat{s}(m)=0$ for 
all $|m|\geq N$, then $s$ is a trigonometric polynomial of degree $<N$ and 
\begin{equation}\label{lde12}
\frac{1}{N} \sum_{j=0}^{N-1} s\Big(\frac{j}{N}\Big)
=
\widehat{s}(0).
\end{equation}

\begin{remark}\label{lml}
A measure $\mu$ on the torus $\T$ is said to be supported on the $N$-th roots of unity if
$\mu\Big(\T \setminus \Big\{ \frac{k}{N}: k=0,1,...,N-1\Big\}\Big)=0.$ In that case, the 
Fourier transform $\widehat{\mu}$ is periodic with period $N$. Indeed for $k\in \Z$ 
we have
\begin{align*}
\widehat{\mu}(k+N)
&=
\int_{\T} e^{-2\pi i (k+N)t} \diff \mu (t)
=
\sum_{j=0}^{N-1} e^{-2\pi i (k+N)\frac{j}{N}} \mu\Big(\Big\{\frac{j}{N}\Big\}\Big)
=\\
&=
\sum_{j=0}^{N-1} e^{-2\pi i k \frac{j}{N}} \mu\Big(\Big\{\frac{j}{N}\Big\}\Big)
=
\int_{\T} e^{-2\pi i kt} \diff \mu (t)
=
\widehat{\mu}(k).
\end{align*}
\end{remark}

\section{A Set of Strong Recurrence that is not a van der Corput Set}\label{mythm proof}
	
In this section, we prove \cref{mythm} by constructing a set $R$ which is a set 
of strong recurrence, but not a vdC set. 
This construction hinges on a finitistic result, \cref{wed9}, 
which is proved in \cref{finlem proof}. 
The proof of \cref{wed9} is the most technical part of this paper and it is heavily
inspired by the work of Bourgain in \cite{Bou}. 
Before we can state \cref{wed9} and prove \cref{mythm}, 
we need some definitions and lemmas.
	
From now on, given a positive integer $n$, we denote the set $\{0,1,...,n-1\}$ by $[n]$.
	
\begin{definition}
Let $\epsilon>0$ and let $R\subseteq \mathbb{N}$. We say that $R$ is a set of 
$\epsilon$-recurrence if there exists some $n\in \mathbb{N}$ such that for all 
sets  $E\subseteq [n]$ with 
$|E|>\epsilon n$, there is some $r\in R$ such that $E\cap(E-r)\neq \emptyset$.
\end{definition}
	
\begin{definition}
Let $\epsilon>0$ and let $R\subseteq \mathbb{N}$. We say that $R$ is an $\epsilon$-vdC 
set if whenever a probability measure $\mu$ on $\mathbb{T}$ satisfies that 
$\widehat{\mu}(r)=0$ for every $r\in R$, then $\mu(\{0\})\leq \epsilon$.
\end{definition}
	
\begin{example}\label{sat4}
For $0<\epsilon<1$, take an $N\in \mathbb{N}$ with  $N>\frac{1}{\epsilon}$ and consider 
the set $R:=\{1,...,N-1\}$. Then, it is not difficult to show that if $E$ is a subset of
$[N]=\{0,1,...,N-1\}$ with $|E|>\epsilon N$, then there is some $r\in R$ such that 
$E\cap(E-r)\neq \emptyset$. Hence, $R$ is a set of $\epsilon$-recurrence. 
On the other hand, if $\mu$ is a probability measure on $\T$ such that 
$\widehat{\mu}(r)=0$ for every $r\in \{1,...,N-1\}$, then using the fact that the 
Fejer kernel is non-negative, we obtain that 
\begin{equation*}
N\mu(\{0\})= F_N(0) \mu(\{0\})\leq \int_{\T} F_N(t) \diff \mu(t)=\widehat{\mu}(0)=1,
\end{equation*}
which implies that $\mu(\{0\}) \leq \frac{1}{N} < \epsilon $. Therefore, the set $R$ 
is also an $\epsilon$-vdC set.
\end{example}
	
\cref{sat4} shows that while there are no finite sets of recurrence or finite vdC sets, 
for every $\epsilon>0$ there are finite sets of $\epsilon$-recurrence and finite 
$\epsilon$-vdC sets.
	
\begin{proposition}\label{ldpr}
If $R$ is a set of $\epsilon$-recurrence, then there exist arbitrarily large 
$n\in \mathbb{N}$ such that for all sets $E\subseteq [n]$ with 
$|E|>\epsilon n$, there is some $r\in R$ such that $E\cap(E-r)\neq \emptyset$. 
\end{proposition}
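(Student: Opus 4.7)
The plan is to show that if $n_0$ is any integer witnessing the $\epsilon$-recurrence property of $R$, then every positive integer multiple $n = kn_0$ also witnesses this property. Since $k$ can be taken arbitrarily large, this yields arbitrarily large such $n$.

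To carry this out, fix $k \in \N$ and suppose $E \subseteq [kn_0]$ satisfies $|E| > \epsilon k n_0$. I would partition $[kn_0]$ into the $k$ consecutive blocks $B_j = \{jn_0, jn_0+1, \ldots, (j+1)n_0-1\}$ for $j = 0, 1, \ldots, k-1$. Since
\[
\sum_{j=0}^{k-1} |E \cap B_j| = |E| > \epsilon k n_0,
\]
a pigeonhole argument (if every term were $\leq \epsilon n_0$, the sum would be $\leq \epsilon k n_0$) produces some $j$ with $|E \cap B_j| > \epsilon n_0$.

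Next I would translate this block back to $[n_0]$: set $E' := \{x - jn_0 : x \in E \cap B_j\} \subseteq [n_0]$, so that $|E'| > \epsilon n_0$. Applying the defining property of $R$ as a set of $\epsilon$-recurrence (with witness $n_0$) to $E'$, I obtain some $r \in R$ and an element $y \in E' \cap (E' - r)$, i.e.\ $y, y+r \in E'$. Translating back, $y + jn_0$ and $y + r + jn_0$ both lie in $E$, so $E \cap (E - r) \neq \emptyset$. Hence $n = k n_0$ witnesses the property, and letting $k \to \infty$ concludes the argument.

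There is no real obstacle here; the only minor point worth being careful about is the strict inequality in the pigeonhole step, which is precisely what allows the application of the defining property of $R$ (which requires $|E'| > \epsilon n_0$, not merely $\geq$).
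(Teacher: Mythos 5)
Your proof is correct and follows essentially the same argument as the paper: take multiples $kn_0$ of a witness $n_0$, partition $[kn_0]$ into $k$ translates of $[n_0]$, apply pigeonhole to find a dense block, translate it back to $[n_0]$, invoke the defining property of $R$, and translate the resulting pair back into $E$.
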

	
\begin{proof}
$R$ is a set of $\epsilon$-recurrence, and hence there exists an $m \in \mathbb{N}$ 
such that whenever $E\subseteq [m]$ satisfies that $|E|>\epsilon m$, then there is 
some $r\in R$ such that $E\cap(E-r)\neq \emptyset$.
		
For $k\in \mathbb{N}$ consider the natural number $km$. Let $E\subseteq [km]$ with
$|E|>\epsilon km$. For every $j\in \{0,1,...,k-1\}$, define 
$E_j :=E\cap [jm, (j+1)m -1].$
Then, $E=\bigcup_{j=0}^{k-1} E_j$ and the union is disjoint, which 
implies that $|E|=\sum_{j=0}^{k-1} |E_j|$. 
Since $|E|>\epsilon k m$, there is some $j \in \{0,1,...,k-1\}$ such that 
$|E_{j}|>\epsilon m$. For this $j$, let
\[ H=E_{j}-jm = E\cap [jm, (j+1)m -1] - j m  \]
Then $H\subseteq [m]$ and $|H|=|E_{j}|>\epsilon m$. 
Therefore, by assumption on $m$, there exists some $r\in R$ such that 
$H\cap (H-r)\neq \emptyset$. Take $h\in H\cap (H-r)$ and note that $h,\:h+r \in H$.
		
Now, let $\ell=h+jm$. Then $\ell,\:\ell+r\in E_{j}$, and therefore 
$\ell\in E_{j} \cap (E_{j}-r)$. 
As a result, $E\cap(E-r)\supseteq E_{j} \cap (E_{j}-r)\neq \emptyset$, 
and this concludes the proof of the proposition.
\end{proof}
	
The following theorem is the finitistic analog of \cref{mythm}:
	
\begin{theorem}\label{wed9}
For every $j\in \mathbb{N}$ and for every $\epsilon \in \big(0,\frac{1}{2}\big)$ there 
is a finite set $R_j \subseteq \mathbb{N}$ which is a set of 
$\big(\frac{1}{j}\big)$-recurrence but not an $\epsilon$-vdC set. 
\end{theorem}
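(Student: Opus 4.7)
The plan is to construct the finite set $R_j$ and a witnessing probability measure $\mu$ simultaneously, with $R_j$ chosen to be (essentially) the zero set of $\widehat{\mu}$. First I would fix a large parameter $N$ depending on $j$ and $\epsilon$ and seek a probability measure $\mu$ supported on the $N$-th roots of unity with atom mass $\mu(\{0\}) > \epsilon$. By \cref{lml}, $\widehat{\mu}$ is $N$-periodic and is determined by a weight vector $g\colon \Z/N\Z \to \R_{\geq 0}$ via $\mu(\{k/N\}) = g(k)/N$; the condition $\mu(\{0\}) > \epsilon$ translates to $g(0) > \epsilon N$, the normalization becomes $\sum_k g(k) = N$, and the vanishing $\widehat{\mu}(r) = 0$ for $r$ in a prescribed set imposes linear constraints on $g$. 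Defining $R_j := \{r \in [1, N-1] : \widehat{\mu}(r) = 0\}$, the measure $\mu$ immediately witnesses that $R_j$ is not an $\epsilon$-vdC set.

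The core of the proof is then verifying that this $R_j$ is a set of $(1/j)$-recurrence, which I would address via a Fourier-analytic reformulation: it suffices to exhibit some $n$ such that every $E \subseteq [n]$ with $|E| > n/j$ satisfies $\sum_{r \in R_j} |E \cap (E - r)| > 0$. Expanding $\mathbf{1}_E$ in a Fourier series on $\Z/n\Z$ rewrites this count as a weighted sum of $|\widehat{\mathbf{1}_E}|^2$ against the exponential sum over $R_j$. The Fejer-kernel majorization of \cref{ldl2} is then available to smooth the relevant non-negative trigonometric polynomial, and \cref{polcr} provides a further supply of non-negative trigonometric polynomials for engineering the comparison; the explicit structure of $g$ must be exploited to force a strictly positive lower bound on the resulting expression.

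The principal obstacle is reconciling two opposing demands on $R_j$. Recurrence requires $R_j$ to be Fourier-rich enough that every dense subset of $[n]$ has a difference in $R_j$, while the non-vdC property requires a probability measure with a large atom whose Fourier transform vanishes on $R_j$, forcing $R_j$ to avoid certain structural frequencies. This tension is why a naive construction like that of \cref{sat4} cannot succeed: there, $R = \{1, \ldots, N-1\}$ turns out to be simultaneously $\epsilon$-recurrent and $\epsilon$-vdC. Resolving the tension requires a delicate multi-scale construction of $g$ in the spirit of Bourgain \cite{Bou}, with carefully calibrated parameters $N$, $|R_j|$, the support of $g$, and the size of the atom $g(0)/N$; this is what will be carried out in \cref{finlem proof}.
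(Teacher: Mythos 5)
You correctly identify the overall framework: take a probability measure $\mu$ supported on roots of unity with $\mu(\{0\}) > \epsilon$, define $R_j$ as (a finite truncation of) the zero set of $\widehat{\mu}$, and then grapple with the tension between the recurrence property and the non-vdC property. This is exactly the shape of the paper's argument, and your observation that the set of \cref{sat4} fails is on point.

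However, the route you sketch for the recurrence half diverges from the paper's, and it is not clear it can be made to work. You propose a Fourier-positivity argument: expand $\sum_{r\in R_j}|E\cap(E-r)|$ via Plancherel on $\Z/n\Z$ as $\sum_\xi |\widehat{\mathbf{1}_E}(\xi)|^2 \sum_{r\in R_j} e^{2\pi i r\xi/n}$ and try to lower-bound it. The difficulty is that $R_j$, being the zero set of $\widehat{\mu}$, carries no structure that makes the inner exponential sum sign-definite or otherwise controllable at all frequencies; unlike in Fourier proofs of S\'ark\"ozy-type theorems there is no number-theoretic skeleton underneath $R_j$ to run a major/minor arc analysis on. The paper avoids this entirely by proving recurrence \emph{combinatorially}: \cref{comblem} (which rests on the Bourgain-style digit argument \cref{comblem1} and the quantitative Poincar\'e bound \cref{PRT}) shows that any $E\subseteq[Q^P]$ with $|E|>Q^P/j$ contains a difference $y$ whose base-$Q$ digits lie in $[1,8j)$ except for one digit $y_s\in[Q/2,Q/2+8j)$. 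The measure $\mu$ is then engineered as a normalization of $\delta_0 + \sigma_0*\cdots*\sigma_{P-1}$, where each $\sigma_k$ is supplied by \cref{ldl1} so that $\widehat{\sigma_k}\equiv 1$ on the block $[Q^k, 8jQ^k]$ and $\widehat{\sigma_k}\equiv -1$ on $[\tfrac{Q}{2}Q^k, (\tfrac{Q}{2}+8j)Q^k]$, with period $Q^{k+1}$; multiplying these out over the digits of $y$ gives $\widehat{\sigma}(y)=-1$ and hence $\widehat{\mu}(y)=0$, so $y\in R_j$. Note also that \cref{ldl2} and \cref{polcr} are deployed in the paper solely inside the construction of each $\sigma_k$ in \cref{ldl1} (to prove non-negativity of the auxiliary trigonometric polynomial $s$), not to prove recurrence; so the role you assign those two lemmas is misplaced. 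In short: your non-vdC half is fine, but the recurrence half needs the combinatorial digit-extraction argument, not a Fourier-positivity bound.
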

	
Note that if $\mu$ is a probability measure on $\T$ and 
$\mu (\{0\})>\frac{1}{2}$, then for every $n\in \N$ we have 
\begin{equation*}
\left| \widehat{\mu}(n) \right|
=
\Bigg|\int_{\T} e^{-2\pi i nt}\diff \mu(t)\Bigg|
\geq 
\mu(\{0\}) - 
\Bigg| \int_{\T \setminus \{0\}} e^{-2\pi i nt} \diff \mu(t)\Bigg|
>
0.
\end{equation*}
Therefore, for $\epsilon \in [\frac{1}{2},1] $, every non-empty subset of $\N$ 
is an $\epsilon$-vdC set. In this sense, \cref{wed9} cannot be improved.
	
\cref{wed9} will be proved in the next section. For now, using it, 
we will prove the following theorem.
	
\begin{theorem}\label{hldt}
For every $\epsilon\in \big(0,\frac{1}{3}\big)$, there is a set $R\subseteq \mathbb{N}$ 
which is a set of strong recurrence but not an $\epsilon$-vdC set. 
\end{theorem}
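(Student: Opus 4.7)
The plan is to apply \cref{wed9} for each $j \in \N$ to obtain a finite building block, combine the resulting pieces via a dilation, and extract a single witness measure via weak-$*$ compactness.

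\emph{Setup.} Fix an auxiliary constant $\eta \in (\epsilon, 1/3)$. For each $j \in \N$ invoke \cref{wed9} with parameters $j$ and $\eta$ to obtain a finite set $R_j \subseteq \N$ that is a set of $(1/j)$-recurrence but not an $\eta$-vdC set, together with a witness probability measure $\mu_j$ on $\T$ satisfying $\mu_j(\{0\}) > \eta$ and $\widehat{\mu_j}(r) = 0$ for every $r \in R_j$. Pick a rapidly growing sequence of positive integers $(N_j)$ and set $\tilde R_j := N_j \cdot R_j$. A residue-class decomposition along the lines of \cref{ldpr} shows $\tilde R_j$ remains a set of $(1/j)$-recurrence. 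Let $\tilde{\mu}_j$ be the pushforward of $\mu_j$ under the contraction $t \mapsto t/N_j$ mapping $[0,1)$ into $\T$; then $\tilde{\mu}_j(\{0\}) = \mu_j(\{0\}) > \eta$ and $\widehat{\tilde{\mu}_j}(r) = 0$ for $r \in \tilde R_j$. Define $R := \bigcup_{j \in \N} \tilde R_j$.

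\emph{Strong recurrence of $R$.} By Furstenberg's correspondence principle the claim reduces to showing that for every $S \subseteq \N$ with $\overline{d}(S) = \delta > 0$, $\limsup_{r \in R} \overline{d}(S \cap (S-r)) > 0$. Pick $j$ with $1/j < \delta$; by \cref{ldpr} applied to $\tilde R_j$, for arbitrarily large $n$ with $|S \cap [n]| > n/j$ there exists $r_n \in \tilde R_j$ with $(S \cap [n]) \cap ((S \cap [n]) - r_n) \neq \emptyset$. Combined with a quantitative upgrade of $(1/j)$-recurrence (a positive-density set of pairs, obtainable via standard compactness) and pigeonhole on the finite set $\tilde R_j$, one finds some $r^\ast \in \tilde R_j$ with $\overline{d}(S \cap (S - r^\ast)) > 0$. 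Letting $j \to \infty$ produces infinitely many distinct such $r^\ast$ in $R$, giving the required limsup.

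\emph{Main obstacle: $R$ is not an $\epsilon$-vdC set.} The plan is to build a probability measure $\mu$ on $\T$ with $\mu(\{0\}) > \epsilon$ and $\widehat{\mu}|_R = 0$ via weak-$*$ compactness: for each $m \in \N$ construct a probability measure $\nu_m$ with $\widehat{\nu_m}(r) = 0$ for all $r \in \tilde R_1 \cup \cdots \cup \tilde R_m$ and $\nu_m(\{0\}) > \epsilon$. Any subsequential weak-$*$ limit $\mu$ then has $\mu(\{0\}) \geq \epsilon$ by upper semi-continuity of the atom mass, and $\widehat{\mu}(r) = 0$ for every $r \in R$ (via \cref{rmk1}, since each $r \in R$ eventually lies in the finite union $\tilde R_1 \cup \cdots \cup \tilde R_m$). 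The difficulty is constructing $\nu_m$ with the uniform atom bound. The naive choice $\nu_m := \tilde{\mu}_1 \ast \cdots \ast \tilde{\mu}_m$ achieves $\widehat{\nu_m}|_{\tilde R_1 \cup \cdots \cup \tilde R_m} = 0$ via \eqref{prop1} but its atom at $0$ is only at least $\eta^m$, which tends to $0$. To preserve a definite atom one must exploit the rapid growth of the $N_j$: since $\tilde{\mu}_j$ is supported on a neighborhood of $0$ of size $\sim 1/N_j$, its Fourier transform is approximately $1$ at frequencies much less than $N_j$ and oscillates or decays at higher frequencies. This scale separation should enable a more delicate combination---for instance a suitably weighted convex combination, or an iterative refinement built level by level---achieving the Fourier vanishing while keeping the atom at $0$ above $\epsilon$. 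The threshold $\epsilon < 1/3$ in the hypothesis presumably reflects the quantitative loss incurred in this combination.
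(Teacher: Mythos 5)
Your proposal correctly identifies the overall architecture — invoke \cref{wed9} at every scale, dilate to create separated frequency bands, extract a limit measure by weak-$*$ compactness — but it leaves the central step undone and also has a gap in the strong recurrence argument.

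The main obstacle, which you flag yourself, is precisely the content of the paper's proof, and your sketch does not close it. You correctly observe that the naive convolution $\tilde\mu_1\ast\cdots\ast\tilde\mu_m$ loses the atom at $0$ geometrically, and you speculate that ``scale separation should enable a more delicate combination.'' The paper's resolution is a specific and non-obvious construction: it \emph{removes} the atom first, setting $\nu_j := \mu_j - \epsilon'\delta_0$ (a non-negative measure since $\mu_j(\{0\})>\epsilon'$), builds from $\nu_j$ a positive trigonometric polynomial $b_j$ with $\widehat{b_j}(0)=1$ and $\widehat{b_j}(m)=\widehat{\mu_j}(m)-\epsilon'$ for $1\le|m|\le n_j$, dilates to $d_j(t)=b_j(2N_jt)$, and then \emph{multiplies} (not convolves): $c_j := c_{j-1}\cdot d_j$. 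Multiplying these positive polynomials keeps $\widehat{c_j}(0)=1$, so the densities $c_j\,\mathrm{d}\lambda$ are honest probability measures, and the block-independence of the Fourier supports (\cref{claim}) gives $\widehat{c_j}(2N_jm)=-\epsilon'$ for $m\in R_j$, a value that stabilizes as $j$ grows. After taking a weak-$*$ limit $\sigma$, one re-introduces the atom by forming $\rho=\frac{1}{1+\epsilon'}\sigma+\frac{\epsilon'}{1+\epsilon'}\delta_0$, which kills those Fourier coefficients ($-\epsilon'+\epsilon'=0$) while having $\rho(\{0\})\ge\frac{\epsilon'}{1+\epsilon'}>\epsilon$. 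This ``subtract the atom, normalize, add it back'' trick is the crux, and it is also exactly where the $\tfrac{1}{3}$ threshold enters: $\sup_{\epsilon'<1/2}\frac{\epsilon'}{1+\epsilon'}=\tfrac13$. Your choice of $\eta\in(\epsilon,1/3)$ is therefore not the right normalization — the bound $1/3$ is a consequence of this renormalization, not an input to \cref{wed9}, which works for all $\epsilon'<1/2$.

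The strong recurrence argument also has a gap. You fix $j$ with $1/j<\delta$ and note that $\tilde R_j$ produces some $r^*\in\tilde R_j$ with a quantitative recurrence bound, then say ``letting $j\to\infty$ produces infinitely many distinct such $r^*$.'' But each $\tilde R_j$ is finite, and as $j$ grows the window length $\kappa_j$ grows, so the lower bound you obtain on $\mu(A\cap T^{-r}A)$ (on the order of $\frac{1}{j\,2^{\kappa_j}}$) deteriorates to $0$; a $\limsup$ of quantities that tend to $0$ need not be positive. The paper avoids this by choosing the sequence $(R_j)$ so that each fixed $Q_i$ equals $R_j$ for \emph{infinitely many} $j$. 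Then, for a given system and set $A$, one fixes $i>2/\mu(A)$ once, and since $Q_i=R_j$ for infinitely many $j$ (with $N_j\to\infty$), one obtains infinitely many $r\in 2N_jR_j\subseteq R$ all satisfying the \emph{same} uniform lower bound $\mu(A\cap S^{-r}A)>\frac{1}{i\,2^{\kappa_i}}$, which is what produces a positive $\limsup$. Your construction, which uses each $R_j$ only once, does not achieve this.
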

	
From the definitions, it is clear that since $R$ is not an $\epsilon$-vdC set, 
then it is also not a vdC set. Therefore, \cref{hldt} implies \cref{mythm}. 
	
\begin{proof}[Proof of \cref{hldt}]
Let $\epsilon \in \big(0,\frac{1}{3}\big)$. Take an 
$\epsilon ' \in \big(0,\frac{1}{2}\big)$ such that 
$\frac{\epsilon '}{1+\epsilon '}>\epsilon$. 
For every $i\in \mathbb{N}$, we use \cref{wed9} to find a finite set $Q_{i}$ which is 
a set of $\big(\frac{1}{i}\big)$-recurrence but not an $\epsilon '$-vdC set.
Then there exists a probability measure $\beta_{i}$ on $\T$ such that 
$\widehat{\beta_{i}}(q)=0$ for every $q\in Q_{i} $ and $\beta_{i} (\{ 0\})> \epsilon '$.
		
Since $Q_{i}$ is a set of $\big(\frac{1}{i}\big)$-recurrence, there is a 
$\kappa_{i} \in \mathbb{N}$ satisfying that for all $E\subseteq [\kappa_{i}]$ 
with $|E|>\frac{\kappa_{i}}{i}$, there is some $q\in Q_{i}$ such that 
$E\cap (E-q) \neq \emptyset$. $Q_i$ is finite, so we may assume that 
$Q_i\subseteq \{1,...,\kappa_i\}$ (for otherwise, we can choose a larger $\kappa_i$ 
using \cref{ldpr}).
		
Let $(R_{j})_{j\in \mathbb{N}}$ be a sequence of subsets of $\mathbb{N}$ having 
the following properties:
\begin{itemize}
\item For every $j\in \mathbb{N}$ there exists an $i(j) \in \mathbb{N}$ 
such that $R_{j}=Q_{i(j)}$.
\item For every $i \in \mathbb{N}$ there exist infinitely many $j \in \mathbb{N}$ 
such that $Q_{i}=R_{j}$.
\end{itemize}
An example of a sequence with those properties is the one defined by 
$R_{1}=Q_{1},\:R_{2}=Q_{1},\:R_{3}=Q_{2},\:R_{4}=Q_{1},\:R_{5}=Q_{2},\:R_{6}=Q_{3}$ 
and so on, i.e. the first term of ($R_{j})_{j\in \mathbb{N}}$ is $Q_{1}$, then the next 
two terms are $Q_{1},\:Q_{2}$, then the next three terms are $Q_{1},\:Q_{2},\:Q_{3}$ and 
we continue in the same fashion.
		
For $j\in \N$, and for $i:=i(j)$, let $n_j=\kappa_{i}$ and consider the measures
\begin{equation*}
\mu_{j}=\beta_{i}\:\: 
\text{ and } 
\:\:\nu_{j}=\mu_{j}-\epsilon ' \delta _{0},
\end{equation*}
where $\delta_0$ denotes the Dirac mass at $0\in \T$. Then $(\mu_{j})_{j\in \N}$ 
is a sequence of probability measures on $\T$ satisfying that $\widehat{\mu_{j}}(\ell)=0$ 
for every $\ell\in R_{j}$. In addition, 
$\mu_{j} (\{ 0\})> \epsilon '$, and therefore $(\nu_{j})_{j\in \N}$ is a sequence of 
non-negative measures on $\T$.
		
For every $j\in \N$, let $M_{j}\in \N$ with $M_j>\frac{1}{\epsilon '}n_{j}(n_{j}+1)$. 
We will construct a positive trigonometric polynomial $b_j$ having the following properties:
\begin{itemize}
\item $\widehat{b_{j}}(m)=0$, for $m\in \Z$ with $|m|>M_j$
\item $\widehat{b_{j}}(m)=\widehat{\nu _{j}}(m)=\widehat{\mu_{j}}(m)-\epsilon '$,
for $m\in \Z$ with $0<|m|\leq n_{j}$
\item $\widehat{b_{j}}(0)=1$.
\end{itemize}
		
To do this, first consider the real trigonometric polynomial whose Fourier transform 
is given by
\begin{equation}\label{hld1}
\widehat{a_{j}}(m)=
\begin{cases}
0, \:\text{ if }\: |m|>n_j \\
\widehat{\nu_j}(m)\frac{|m|}{M_j}, \:\text{ if }\: |m|\leq n_j.
\end{cases}
\end{equation}
Note that $\widehat{a_j}(-m)=\overline{\widehat{a_j}(m)}$, and hence $a_j$ is 
indeed real-valued. In addition for every $m\in \Z$,
$\big|\widehat{\nu_{j}}(m)\big|
=
\Big| \int_{\mathbb{T}} e^{-2\pi imt} \diff \nu_{j}(t)\Big|
\leq
\nu_j(\T)
=
1-\epsilon '$, 
and therefore
\begin{equation*}
||a_{j}||_{\infty}
=
\sup_{t\in \T} |a_j(t)|\leq \sum_{m=-n_{j}}^{n_{j}} |\widehat{a_{j}}(m)|
=
\sum_{m=-n_{j}}^{n_{j}} \big|\widehat{\nu_{j}}(m)\big|\: \frac{|m|}{M_{j}} 
<
\sum_{m=-n_{j}}^{n_{j}} \frac{|m|}{M_{j}}
<
\epsilon '.
\end{equation*}
		
This implies that $a_{j}+\epsilon '$ is a positive trigonometric polynomial.
Note that also $F_{M_{j}}\ast \nu_{j}$ is a non-negative trigonometric polynomial, 
and hence the sum
$b_{j}:=a_{j}+\epsilon ' + F_{M_{j}}\ast \nu_{j}$
is a positive trigonometric polynomial. Calculating, one sees that the Fourier transform 
of $b_j$ has the desired properties.
		
Now, inductively we will define two new sequences $(c_{j})_{j\in \mathbb{N}}$, 
$(d_{j})_{j\in \mathbb{N}}$ of trigonometric polynomials and a sequence 
$(N_{j})_{j\in \mathbb{N}}$ of natural numbers. Let $c_{1}:=b_{1}$ and $N_{1}:=M_{1}$. 
For each $j\geq2$ take $N_{j}\in \mathbb{N}$ such that $N_{j}>2(M_{j-1}+1)N_{j-1}$ 
and define 
\[d_{j},\:c_{j}:\T \rightarrow \R,\:\:\: d_{j}(t)=b_{j}(2N_{j}t) \:\:\text{ and } 
\:\: c_{j}(t)=c_{j-1}(t)d_{j}(t)\]
From the definition it is clear that for every $j\in \mathbb{N}$, $\:c_{j}$ and $d_{j}$ 
are positive trigonometric polynomials. Writing $b_j$ as a linear combination of 
characters we see that 
\begin{equation}\label{ld5}
\widehat{b_j}(m)
=
\widehat{d_j}(2N_j m)
\:\text{ for every } m\in \mathbb{Z}\:
\text{ and }
\:\widehat{d_j}(m)
=
0 \text{ when } m \text{ is not divisible by } 2N_j.
\end{equation}
\begin{claim}\label{claim}
For every $j\in \N$, the Fourier transform $\widehat{c_j}$ satisfies the following:
\begin{itemize}
    \item $\widehat{c_{j}}(m)=0$ for $m\in \mathbb{Z}$ with $|m|\geq N_{j+1}$
    \item $\widehat{c_{j}}(m)=\widehat{c_{j+1}}(m)$ for $m\in \mathbb{Z}$ with $|m|<N_{j+1}$
    \item $\widehat{c_j}(0)=1$ for every $j\in \mathbb{N}$.
    \item $\widehat{c_j}(2N_jm)=-\epsilon '$ for every $m\in R_j$.
\end{itemize}
\end{claim}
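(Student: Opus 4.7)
The plan is to prove all four items by induction on $j$, using the recursion $c_{j+1}=c_j d_{j+1}$ together with the Fourier convolution identity \eqref{prop}. The structural facts I will repeatedly invoke are: (i) by \eqref{ld5}, $\widehat{d_{j+1}}$ is supported on the lattice $2N_{j+1}\Z$ with $\widehat{d_{j+1}}(2N_{j+1}r)=\widehat{b_{j+1}}(r)$; (ii) $\widehat{b_{j+1}}$ vanishes outside $[-M_{j+1},M_{j+1}]$, equals $1$ at $0$, and equals $-\epsilon'$ at every $m_0\in R_{j+1}\subseteq\{1,\dots,n_{j+1}\}$ (since $\widehat{\mu_{j+1}}(m_0)=0$ on $R_{j+1}$ and $\widehat{b_{j+1}}(m_0)=\widehat{\mu_{j+1}}(m_0)-\epsilon'$ for $0<|m_0|\leq n_{j+1}$); and (iii) the spacing condition $N_{j+2}>2(M_{j+1}+1)N_{j+1}$.

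For the base case $j=1$ we have $c_1=b_1$ and $N_1=M_1$, which directly yields items 1 (using $N_2>M_1$) and 3 from the corresponding properties of $b_1$. Item 4 would require $\widehat{b_1}(2M_1 m_0)=-\epsilon'$ for $m_0\in R_1$, which fails whenever $R_1\neq\emptyset$; however the construction permits $R_1=\emptyset$, since every set (including the empty set) is trivially a set of $1$-recurrence and $\emptyset$ is clearly not an $\epsilon'$-vdC set, so item 4 is vacuous at $j=1$.

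The inductive step is the heart of the argument. Assuming items 1 and 3 at stage $j$, I expand
\[ \widehat{c_{j+1}}(m)=\sum_{k\in\Z}\widehat{c_j}(k)\,\widehat{d_{j+1}}(m-k), \]
and observe that any nonzero summand forces $|k|<N_{j+1}$ (by item 1 at stage $j$) and $m-k=2N_{j+1}r$ for some $r\in\Z$ with $|r|\leq M_{j+1}$ (by (i) and (ii)). Item 1 at stage $j+1$ then follows from $|m|\leq|k|+|m-k|<N_{j+1}+2M_{j+1}N_{j+1}<N_{j+2}$. For item 2 at stage $j$, restricting to $|m|<N_{j+1}$ forces $|m-k|<2N_{j+1}$ and hence $r=0$, $k=m$, giving $\widehat{c_{j+1}}(m)=\widehat{c_j}(m)\widehat{b_{j+1}}(0)=\widehat{c_j}(m)$. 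For items 3 and 4 at stage $j+1$, substituting $m=0$ or $m=2N_{j+1}m_0$ with $m_0\in R_{j+1}$ forces the unique choice $k=0$, whence $\widehat{c_{j+1}}(0)=\widehat{c_j}(0)\widehat{b_{j+1}}(0)=1$ and $\widehat{c_{j+1}}(2N_{j+1}m_0)=\widehat{c_j}(0)\widehat{b_{j+1}}(m_0)=-\epsilon'$.

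The entire argument is essentially one convolution computation iterated, so the main obstacle is purely bookkeeping: checking that the spacing condition (iii) is exactly the right hypothesis both to propagate item 1 forward and to isolate the unique contributing index $k$ in items 2--4.
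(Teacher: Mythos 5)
Your argument is correct and is essentially the paper's proof: expand $\widehat{c_{j+1}}$ via the convolution identity \eqref{prop}, use that $\widehat{c_j}$ is supported in $\{|m|<N_{j+1}\}$ and that $\widehat{d_{j+1}}$ is supported on the lattice $2N_{j+1}\Z$ with $\widehat{d_{j+1}}(2N_{j+1}r)=\widehat{b_{j+1}}(r)$, and invoke the spacing $N_{j+2}>2(M_{j+1}+1)N_{j+1}$ to isolate the unique surviving term. The only difference is organizational: the paper establishes item~1 by induction first and then derives items~2--4 from it, with item~4 obtained by applying item~1 to $\widehat{c_{j-1}}$ (hence implicitly treating only $j\geq 2$); you instead run a single induction that carries all the items forward from stage $j$ to stage $j+1$.

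That reorganization exposes a real, though entirely harmless, slip that the paper glosses over. You are right that item~4 at $j=1$ fails for the paper's actual construction: $c_1=b_1$, $N_1=M_1$, and $\widehat{b_1}(m')=0$ for $|m'|>M_1$, so $\widehat{c_1}(2N_1m)=\widehat{b_1}(2M_1m)=0\neq-\epsilon'$ for every $m\geq 1$, while $R_1=Q_{i(1)}$ is nonempty. However, your patch of declaring $R_1=\emptyset$ does not fit the framework as written: the sequence $(R_j)$ is required to enumerate the sets $Q_i$ produced by \cref{wed9}, all of which are nonempty, and the objects $\mu_1$, $\nu_1$, $n_1$, $b_1$ are all built from the data attached to $Q_{i(1)}$. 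The cleaner resolution is simply to restrict item~4 (and the resulting containment $2N_jR_j\subseteq R$) to $j\geq 2$: since every $Q_i$ occurs as $R_j$ for infinitely many $j$, the strong-recurrence argument in the proof of \cref{hldt} never needs the index $j=1$, and nothing downstream is affected.
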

The proof of \cref{claim} will be presented after we finish the proof of the theorem.
		
Recall that $\lambda$ denotes the Lebesgue measure on $\T$. For every $j\in \N$, $c_{j}$ 
is a non-negative Borel measurable function on $\T$ and 
$\int_{\mathbb{T}} c_{j}(t)\diff \lambda(t)=\widehat{c_{j}}(0)=1$. Consider the sequence 
of probability measures $\sigma_{j}=c_{j}\diff \lambda$, $j\in \mathbb{N}$, on $\T$. 
Since the space of probability measures on $\T$ is weak$^\ast$ compact, we can consider 
a weak$^\ast$ limit point $\sigma$ of $(\sigma_{j})_{j\in \N}$.
Let $\rho:=\frac{1}{1+\epsilon '}\sigma+\frac{\epsilon '}{1+\epsilon '}\delta_{0}$. 
It is clear that $\sigma$, $\rho$ are probability measures on $\T$.
		
Consider the set $R:=\{r\in\mathbb{N}:\widehat{\rho}(r)=0\}$. Since 
$\rho(\{0\})
\geq
\frac{\epsilon '}{1+\epsilon '}>\epsilon$, 
$R$ is not an $\epsilon$-vdC set. 
We claim that the set $R$ contains the set $2N_j R_j$ for all $j\in \N$. 
	
Since $\sigma$ is a weak$^{\ast}$ limit point of $(\sigma_{\ell})_{\ell \in \N}$, 
we may assume that $\sigma_{\ell}\to \sigma$ in the weak$^{\ast}$ sense as 
$\ell \to \infty$ (for otherwise, we can pass to a subsequence).
Fix a $j\in \mathbb{N}$ and let $m\in R_{j}\subseteq \{1,...,n_{j}\}$. 
For every $\ell>j$ we have 
$2N_{j}m
\leq 
2N_{j}n_{j}
<
N_{\ell}$
and therefore 
$\widehat{c_{\ell}}(2N_{j}m)
=
\widehat{c_{\ell-1}}(2N_{j}m)
=...=
\widehat{c_{j}}(2N_{j}m)$.
Combining that with \cref{rmk1}, we obtain that
\begin{equation*}
\widehat{\sigma}(2N_{j}m)
=
\lim_{\ell\to \infty} {\widehat{\sigma _{\ell}}}(2N_{j}m)
=
\widehat{c_{j}}(2N_{j}m)
=
-\epsilon '.
\end{equation*}
As a result, 
$ \widehat{\rho}(2N_{j}m)
=
\frac{1}{1+\epsilon '} \widehat{\sigma}(2N_{j}m) + 
\frac{\epsilon '}{1+\epsilon '} \widehat{\delta_0}(2N_j m)
=
0$,
which in turn implies that $2N_{j}m\in R$.
		
We will now show that the union $\bigcup _{j\in \N} 2N_j R_j$ is a set of 
strong recurrence. Since $R$ contains this union, it will follow that $R$ is  
a set of strong recurrence as well.
Let $(X,\mathcal{A},\mu,S)$ be a measure preserving system and let 
$A\in \mathcal{A}$ with $\mu(A)>0$. Take an $i\in \mathbb{N}$ such that 
$i>\frac{2}{\mu(A)}$. From the construction of the sequence 
$(R_{j})_{j\in \mathbb{N}}$, there exist infinitely many $j$'s such that 
$R_{j}=Q_{i}$ and $n_j=\kappa_i$. Fix such a $j$ and consider 
the measure-preserving transformation $T:=S^{2N_j}$.
		
For every $x\in X$ define the set 
$E(x):=\big\{ n\in [\kappa_i] :\:\: T ^{n} x \in A\big\}$ 
and consider the function 
\[f:X \rightarrow \mathbb{R}_{\geq 0},\:\:
f(x)
=
\frac{1}{\kappa_i}\big|E(x)\big|
=
\frac{1}{\kappa_i} \sum_{n=0}^{\kappa_i-1} \mathbbm{1} _{A} \circ T ^{n} (x).\]
Since $T$ preserves $\mu$, we have
$\int_{X} f(x) \diff \mu(x)
=
\mu(A)>\frac{2}{i}$. Using this it is not difficult to derive that 
\begin{equation}\label{hld10}
\mu (\{x : |E(x)|\geq \frac{\kappa_{i}}{i}\})
=
\mu(\{x:f(x)\geq \frac{1}{i}\})
>
\frac{1}{i}.
\end{equation}
Note that for every $x\in X$, $E(x)$ is a subset of $\{0,1,...,\kappa_i-1\}$, 
so there are $2^{\kappa_i}$ possibilities for $E(x)$. Combining this with 
\eqref{hld10},
we obtain a set $E\subseteq \{0,1,...,\kappa_i-1\}$ such that
\begin{equation*}
\big|E\big|\geq \frac{\kappa_i}{i}\:\:\:
\text{ and }\:\:\:
\mu(\{x: E(x)=E\})>\frac{1}{i2^{\kappa_i}}.
\end{equation*}
Let 
$C:=\{x: E(x)=E\}$.
Recall that $Q_i$ is a set of 
$\big(\frac{1}{i}\big)$-recurrence,
and since 
$\big|E\big|\geq \frac{\kappa_i}{i}$,
there exists some 
$r\in Q_{i}\:(=R_j)$
such that 
$E\cap(E-r)\neq \emptyset$.
Take an $n\in E\cap(E-r)$.
Then $n,n+r\in E$, and therefore for every 
$x\in C$
we have 
$T^{n}x,T^{n+r}x \in A$,
which implies that 
$C\subseteq T^{-n}({A\cap T^{-r}A})$.
As a result,
\[\frac{1}{2^{\kappa_{i}} i}< \mu(C)\leq \mu(T^{-n}({A\cap T^{-r}A}))
=
\mu(A\cap T^{-r}A)=\mu(A\cap S^{-2N_j r}A),\]
and $r\in Q_i=R_j$, whence $2N_{j}r\in 2N_{j}R_{j}\subseteq R$.
	
So finally, we proved the following: 
For every $j\in \N$ satisfying that $R_j=Q_i$, there exists some $r\in R_j$ such that
$\mu(A\cap S^{-2N_j r}A)
>
\frac{1}{2^{\kappa_{i}} i}$.
Since there are infinitely many $j$'s satisfying that $R_j=Q_i$, 
and since $2N_j R_j\subseteq R$ and $N_j\to \infty$ as $j\to \infty$, 
we obtain that 
\begin{equation*}
\limsup_{\substack{r\rightarrow+\infty\\ r\in R}}  
\:
\mu(A\cap S^{-r} A)
\geq
\frac{1}{i 2^{\kappa_{i}} }
>
0.   
\end{equation*}
		
The measure preserving system $(X,\mathcal{A},\mu,S)$ and the set $A\in \mathcal{A}$ 
were arbitrarily chosen, and hence R is a set of strong recurrence. This concludes 
the proof of the theorem.
\end{proof}
	
Now, we will present the proof of \cref{claim}:
\begin{proof}[Proof of \cref{claim}]
First, by induction in $j$, we will show that 
\begin{equation}\label{hld20}
\text{if }\: |m|\geq N_{j+1}, \:\text{ then }\: \widehat{c_{j}}(m)=0.    
\end{equation} 
Let $m\in \mathbb{Z}$ with $|m|\geq N_{2}.$ Then, since $c_1=b_1$ and $N_2>M_{1}$, 
we have that $\widehat{c_{1}}(m)=\widehat{b_{1}}(m)=0$. Assume that \eqref{hld20} holds 
for some $j$ and let $m\in \mathbb{Z}$ with $|m|\geq N_{j+2}$. Then using \eqref{prop} 
and the induction hypothesis we obtain that 
\begin{equation}\label{sun1}
\widehat{c_{j+1}}(m)
=
\widehat{c_{j}d_{j+1}}(m)
=
\sum_{\substack{k\in \mathbb{Z}\\|k|<N_{j+1}}} \widehat{c_{j}}(k)\widehat{d_{j+1}}(m-k). 
\end{equation}
Observe that $b_{j+1}$ is a trigonometric polynomial with `highest power term' 
at most $M_{j+1}$ (this means that $\widehat{b_{j+1}}(\ell)=0$ for every 
$\ell\in \mathbb{Z}$ with $|\ell|>M_{j+1}$). Since $d_{j+1}(t)=b_{j+1}(2N_{j+1}t)$, 
we have that
$d_{j+1}$ is a trigonometric polynomial with `highest power term' at 
most $2N_{j+1}M_{j+1}$, i.e.
$\widehat{d_{j+1}}(\ell)
=
0$  
for every  $\ell\in \mathbb{Z}$ with $|\ell|>2N_{j+1}M_{j+1}$.
For $k\in \mathbb{Z}$ with $|k|<N_{j+1}$, we have
$|m-k|
>
N_{j+2}-N_{j+1}
>
2N_{j+1}M_{j+1}$, and therefore $\widehat{d_{j+1}}(m-k)=0$. As a result, all the terms 
in the sum in \eqref{sun1} are $0$, which in turn implies that 
$\widehat{c_{j+1}}(m)=0$. This concludes the induction.
		
Let $m\in \mathbb{Z}$ with $|m|<N_{j+1}$. Using \eqref{prop} and \eqref{hld20}, 
we obtain that 
\begin{equation}\label{sun2}
\widehat{c_{j+1}}(m)
=
\widehat{c_{j}d_{j+1}}(m)
=
\sum_{\substack{k\in \mathbb{Z}\\|k|<N_{j+1}}}
\widehat{c_{j}}(k)\widehat{d_{j+1}}(m-k)    
\end{equation}
For $k\in \mathbb{Z}$ with $|k|< N_{j+1}$, if $k\neq m$, then $m-k$ is not a multiple 
of $2N_{j+1}$, and hence from \eqref{ld5} we get that $\widehat{d_{j+1}}(m-k)=0$. 
Therefore in \eqref{sun2} 
the only possible non-zero term in the sum is the one corresponding to $k=m$, 
and thus
\begin{equation}\label{sun3}
\widehat{c_{j}}(m)
=
\widehat{c_{j-1}}(m)\widehat{d_j}(0)
=
\widehat{c_{j-1}}(m)\widehat{b_j}(0)
=
\widehat{c_{j-1}}(m).    
\end{equation}
In particular, this also implies that 
$\widehat{c_{j}}(0)=\widehat{c_{j-1}}(0)=...=\widehat{c_{1}}(0)=\widehat{b_{1}}(0)=1$ 
for every $j\in \mathbb{N}$.
		
Finally, let $m\in R_j$. Then applying \eqref{hld20} to $\widehat{c_{j-1}}$, we obtain that
\begin{equation}\label{hld30}
\widehat{c_{j}}(2N_{j}m)
=
\sum_{\substack{k\in \mathbb{Z}\\|k|<N_{j}}} 
\widehat{c_{j-1}}(k)\widehat{d_{j}}(2N_{j}m-k)   
\end{equation}
Observe that the only $k\in \Z$ with $|k|<N_j$ satisfying that $2N_{j}m-k$ is a multiple 
of $2N_{j}$ is $k=0$, and therefore, by \eqref{ld5}, the only non-zero term in the sum 
in \eqref{hld30} is the one corresponding to $k=0$. As a result, 
$\widehat{c_j}(2N_{j}m)
=
\widehat{c_{j-1}}(0)\widehat{d_{j}}(2N_{j}m)
=
\widehat{b_{j}}(m)
=
\widehat{\mu_{j}}(m)-\epsilon '
=
-\epsilon '$, 
where the last equality is due to the fact that $\widehat{\mu_j}=0$ on $R_j$. 
This concludes the proof of \cref{claim}.
\end{proof}
	
While \cref{wed9} states that for every 
$\epsilon \in \big(0,\frac{1}{2}\big)$ and every 
$j\in \N$, there is a set $R$ of $\big(\frac{1}{j}\big)$-recurrence 
which is not an $\epsilon$-vdC set, in \cref{hldt} we managed to prove
the existence of set $R$ of strong recurrence 
which is not an $\epsilon$-vdC set 
only for $\epsilon \in \big(0,\frac{1}{3}\big)$. 
An interesting question that arises naturally is whether \cref{hldt} 
can be extended so that it covers the case 
$\epsilon \in \big(0,\frac{1}{2}\big)$ :
\begin{question}
Is it true that for every $\epsilon \in \big(0,\frac{1}{2}\big)$ there is a 
set $R$ which is a set of strong recurrence but not an $\epsilon$-vdC set?
\end{question}
\section{A Finitistic Theorem}\label{finlem proof}
In this final section, we present the proof of \cref{wed9}. In order to give the proof, 
we first need to state and prove a series of lemmas.
	
\subsection{A Combinatorial Lemma}
	
\begin{lemma}\label{comblem}
Let $j,Q \in \N$, with $Q$ even and sufficiently large depending on $j$, and 
let $P\in \N$ with $P>Q \log 2j^2$. Then, for any 
$E\subseteq [Q^P]=\{0,1,...,Q^P -1\}$ with $\big| E \big| > \frac{Q^P}{j}$, 
there is a point $y\in E-E$ such that, when written in base $Q$, i.e.,
\[y=\sum_{i=0}^{P-1} y_i Q^i,\:\: y_i\in \{0,1,...,Q-1\},\]
one of the digits, say $y_s$, satisfies $\frac{Q}{2} \leq y_s < \frac{Q}{2} + 8j$ 
and all the other digits satisfy that $1\leq y_i < 8j$.
\end{lemma}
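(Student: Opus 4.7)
The plan is to prove \cref{comblem} by an iterated pigeonhole argument on the base-$Q$ digits of elements of $E$. First, I set up notation to split each digit $x_i$ of $x \in E$ into a coarse part $\beta_i(x) := \lfloor x_i/(8j)\rfloor \in \{0, \ldots, K-1\}$ (where $K = \lceil Q/(8j)\rceil$) and a fine part $r_i(x) := x_i \bmod 8j \in \{0, \ldots, 8j-1\}$. Assuming $Q$ is even and sufficiently large so that $16j$ divides $Q$, set $M := Q/(16j)$, so $8jM = Q/2$. Each $x \in [Q^P]$ then corresponds bijectively to a pair $(\beta(x), r(x)) \in \{0, \ldots, K-1\}^P \times \{0, \ldots, 8j-1\}^P$.

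The key observation is that if $x, x' \in E$ and $s \in [P]$ satisfy $\beta(x) - \beta(x') = Me_s$, $r(x) \geq r(x')$ coordinate-wise, and $r_i(x) > r_i(x')$ for all $i \neq s$, then $y := x - x'$ has the desired digit structure. Indeed, no borrows arise in the base-$Q$ subtraction since all per-position differences $x_i - x'_i$ are nonnegative; thus $y_i = r_i(x) - r_i(x') \in [1, 8j)$ for $i \neq s$, and $y_s = 8jM + (r_s(x) - r_s(x')) \in [Q/2, Q/2 + 8j)$. So the task reduces to finding such a pair $(x, x')$ together with a valid choice of $s$.

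To find this pair, I would apply a two-stage pigeonhole: first partition $E$ into the $K^P$ buckets $E_\beta := \{x \in E : \beta(x) = \beta\}$, then for each choice of $s$ look for pairs of buckets $(\beta, \beta + Me_s)$ both of which are densely populated. The density assumption $|E| > Q^P/j$ guarantees many large buckets, and the condition $P > Q\log(2j^2)$ controls how the pigeonhole losses propagate, ensuring enough such shifted bucket pairs survive. Within a dense bucket pair, a further combinatorial argument on the fine-part space $\{0, \ldots, 8j-1\}^P$ should produce the required $r$-domination pair: one partitions this space into antichains in the relevant (strict-at-all-but-$s$) partial order and checks that the relevant subset is too large to be contained in any single antichain. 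Because $s$ is allowed to range over $[P]$, one has $P$ chances to succeed in this antichain-versus-density comparison, and the condition on $P$ is what makes this succeed.

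The main obstacle is balancing the two pigeonhole stages: the coarse-level pigeonhole loses a factor of roughly $(Q/(8j))^P$, while the fine-level domination extraction requires a density threshold that is exponentially sensitive to $P$. The exponential condition $P > Q\log(2j^2)$ is precisely what allows both stages to succeed simultaneously, provided one handles the coupling between the choices of $s$, $\beta$, and the $r$-domination pattern with care. Tracking constants carefully through these steps, in the finitary combinatorial style of Bourgain's argument in \cite{Bou}, is the main technical difficulty and presumably the reason the author describes this as the most technical part of the paper.
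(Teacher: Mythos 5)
Your ``key observation'' is correct, and your plan correctly identifies the shape of the object you need to produce: a pair $x,x'\in E$ whose digit-wise differences are in $[1,8j)$ in all but one position $s$, where the difference is in $[Q/2,Q/2+8j)$. But the machinery you propose to produce such a pair diverges from the paper's proof and, as sketched, has a genuine gap.

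The central difficulty is that you are trying to locate a pair with a \emph{strict} (positive) coordinate-wise difference in all $P-1$ positions $i\neq s$ purely by pigeonhole on the digit decomposition. The obvious obstruction is that under your ``strict-at-all-but-$s$'' relation there are very large antichains in the fine space $\{0,\dots,8j-1\}^P$: for instance, fixing any single coordinate $i_0\neq s$ to a constant gives an antichain of density $1/(8j)$, and one can cook up bipartite configurations (e.g.\ $A=\{r:r_{i_0}\leq m\}$, $A'=\{r:r_{i_0}>m\}$) where both sides have density close to $1/2$ yet no domination pair exists with $x\in A$, $x'\in A'$. Since the average fine-space density of the buckets is only $1/j$, an unconditional antichain/Dilworth argument does not close the argument, and the coupling you flag between $s$, $\beta$, and the $r$-pattern is not merely a matter of ``tracking constants'': it is the crux, and you have not resolved it. You also would need an argument that two \emph{specific} buckets at coarse distance $Me_s$ are simultaneously dense enough, which a single-shot pigeonhole does not give.

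The paper sidesteps the domination problem entirely with a different idea, which is the missing ingredient in your proposal. It identifies $[Q^P]$ with $\Z_Q^P$ and introduces the measure-preserving shift $T(x)=(x_0+4,\dots,x_{P-1}+4)\ (\bmod\ Q)$. A quantitative Poincar\'e Recurrence Theorem (\cref{PRT}) produces $n$ with $1\leq n<2j$ and $|E\cap T^{-n}E|>Q^P/(2j^2)$. One then applies the filling/deletion lemma \cref{comblem1} (adapted from Bourgain, with $\ell=2j^2$; this is what the hypothesis $P>Q\log 2j^2$ feeds into) to $B=E\cap T^{-n}E$ to find $x,x'\in B$ that agree below coordinate $s$, have $x_s=0$, $x_s'=Q/2$, and satisfy $d(x_i,x_i')\leq 2$ above $s$. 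The point is that $T^nx'$ and $x$ are \emph{both} in $E$, and every digit of $T^nx'-x$ is automatically shifted by $4n$ (with $4\leq 4n<8j$), so it lands in $[1,8j)$ with no need for a strict-domination pigeonhole at all; the coordinate $s$ picks up the extra $Q/2$. In short, the ``strictness'' you are trying to extract combinatorially is obtained in the paper for free via the shift-by-$4n$, and that Poincar\'e-plus-filling step is what your proposal is missing.
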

	
In order to prove \cref{comblem} we need another two lemmas, which we will state 
and prove right now. 
\begin{remark}
From now on, we will work several times with the group of integers 
$\bmod Q$, i.e. with $\Z_Q=\Z/ Q\Z=\{0,1,...,Q-1\}$. 
On $\Z_Q$ we put the metric $d$ defined by 
$d(m+Q\Z,n+Q\Z)=\min \{|m-n+kQ|:k\in \Z\}$.
\end{remark}
	
\begin{lemma}[Cf. {\cite[Lemma 3.5]{Bou}}]\label{comblem1}
Let $\ell,Q \in \N$ with $Q$ even and let $P\in \N$ with $P>Q \log \ell$. 
Consider the space 
\begin{equation*}
\Z_Q ^P=\{(x_0,x_1,...,x_{P-1}):x_i \in \Z_Q\text{ for }i\in \{0,1,...,P-1\}\}.   
\end{equation*}
Then, for any $B\subseteq \Z_Q ^P$ with $|B|>\frac{Q^P}{\ell}$, 
there is a pair of points $x,x'\in B$ and some integer $s\in \{0,1,...,P-1\}$ such that 
\begin{itemize}
\item $x_i=x_i'$ for $i=0,1,...,s-1$
\item $x_s=0$ and $x_s'=\frac{Q}{2}$
\item $d(x_i,x_i')\leq 2$ for $i=s+1,...,P-1$.
\end{itemize}
\end{lemma}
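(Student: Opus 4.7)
I would approach \cref{comblem1} by an iterative density-increment / pigeonhole procedure that peels off one coordinate at a time, seeking either to conclude at the current split level or to reduce to a fiber of strictly higher density and continue.

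Set $B^{(0)} := B \subseteq \Z_Q^P$, with initial density $\delta_0 := |B^{(0)}|/Q^P > 1/\ell$. At each step $s \geq 0$ the procedure maintains a common prefix $(y_0, y_1, \ldots, y_{s-1}) \in \Z_Q^s$ together with the conditional set
\[ B^{(s)} := \{(z_s, z_{s+1}, \ldots, z_{P-1}) \in \Z_Q^{P-s} : (y_0, \ldots, y_{s-1}, z_s, \ldots, z_{P-1}) \in B\}, \]
of density $\delta_s := |B^{(s)}|/Q^{P-s}$. For each $a \in \Z_Q$ let the $a$-fiber
\[ B^{(s)}_a := \{(z_{s+1}, \ldots, z_{P-1}) \in \Z_Q^{P-s-1} : (a, z_{s+1}, \ldots, z_{P-1}) \in B^{(s)}\}. \]
The dichotomy at each step is: either \textbf{(a)} there exist $z \in B^{(s)}_0$ and $z' \in B^{(s)}_{Q/2}$ with $d(z_j, z_j') \leq 2$ for every $j = 0, \ldots, P-s-2$, in which case setting $x := (y_0, \ldots, y_{s-1}, 0, z)$ and $x' := (y_0, \ldots, y_{s-1}, Q/2, z')$ produces the sought pair with split at position $s$ and we stop; or \textbf{(b)} no such close pair exists, in which case the failure forces some value $a^* \in \Z_Q$ for which the fiber density $|B^{(s)}_{a^*}|/Q^{P-s-1}$ exceeds $\delta_s \cdot Q/(Q-1)$. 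Extending the prefix by $a^*$ and setting $B^{(s+1)} := B^{(s)}_{a^*}$ continues the iteration.

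The density bookkeeping closes the argument: each branch-(b) step gives $\delta_{s+1} \geq \delta_s \cdot Q/(Q-1)$, and iterating $s$ times yields $\delta_s \geq \delta_0 \cdot (Q/(Q-1))^s > e^{s/Q}/\ell$ (using $\log(1 + 1/(Q-1)) \geq 1/Q$). Since $\delta_s \leq 1$, branch (b) can persist for fewer than $Q \log \ell$ steps, so the hypothesis $P > Q \log \ell$ forces branch (a) to trigger at some $s < P$, yielding the conclusion.

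The principal technical difficulty is extracting the density increment $Q/(Q-1)$ in branch (b) from the failure of (a) alone. Failure of (a) is a geometric statement: the $Q/2$-fiber $B^{(s)}_{Q/2}$ is disjoint from the Minkowski thickening $B^{(s)}_0 + \{-2, -1, 0, 1, 2\}^{P-s-1}$ in $\Z_Q^{P-s-1}$. Turning this separation into multiplicative fiber-concentration demands either (i) a suitable volume-expansion estimate controlling $|B^{(s)}_0 + \{-2, -1, 0, 1, 2\}^{P-s-1}|$ from below in terms of $|B^{(s)}_0|$, or (ii) an aggregation argument running the case-(a) test simultaneously over all $Q/2$ antipodal pairs $(a, a + Q/2)$ to force enough total obstruction. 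Nailing down the correct quantitative form of this step--so that the per-step gain is genuinely $Q/(Q-1)$ rather than the trivial $1$ from pure fiber pigeonhole--is where I expect most of the work to live, and where the evenness of $Q$, the precise threshold $P > Q \log \ell$, and the radius $2$ appearing in the metric condition should all be decisively used.
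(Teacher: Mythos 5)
Your proposal takes a genuinely different structural route from the paper, but it has a gap in the crucial step — the one you yourself flag as "where most of the work lives" — and I do not believe it can be closed in the form you describe.

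The paper does not condition on prefixes or look at fiber densities. Instead it works with global thickenings of $B$. For $A\subseteq\{0,\ldots,P-1\}$ it defines a relation $x\sim_A y$ meaning $x_i=y_i$ for $i\in A$ and $d(x_i,y_i)\leq 1$ for $i\notin A$, and then sets $B_s:=\{y:\ y\sim_{[s]}x\text{ for some }x\in B\}$, so $B=B_P\subseteq B_{P-1}\subseteq\cdots\subseteq B_0\subseteq\Z_Q^P$ (each decrement of $s$ thickens one more coordinate by radius $1$). The claim is that for some $s$ there exists $y\in B_{s+1}$ whose entire coordinate-$s$ circle $\{y(k):k\in\Z_Q\}$ lies in $B_{s+1}$; once this holds, one takes $z=y(0)$, $z'=y(Q/2)\in B_{s+1}$, pulls back to $x,x'\in B$ with $x\sim_{[s+1]}z$, $x'\sim_{[s+1]}z'$, and the radius-$2$ bound $d(x_i,x_i')\leq 2$ for $i>s$ falls out of the triangle inequality through $z_i=z_i'$. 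The density increment is obtained from the \emph{negation} of the claim at level $s$: for each $y\in B_{s+1}$ there is $k$ with $y(k)\notin B_{s+1}$ but $y(k-1)\in B_{s+1}$, whence $y(k)\in B_s\setminus B_{s+1}$; the map $y\mapsto y(k)$ has fibers of size $\leq Q-1$, giving $|B_{s}|\geq \frac{Q}{Q-1}|B_{s+1}|$ cleanly.

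The gap in your version is exactly the branch-(b) assertion that failure of (a) forces some fiber $B^{(s)}_{a^*}$ to have density $>\delta_s\cdot Q/(Q-1)$. This is not a consequence of (a) failing: if, say, $B^{(s)}_0$ and $B^{(s)}_{Q/2}$ are both empty while the remaining $Q-2$ fibers are all equal, (a) fails trivially, yet every fiber density is at most $\delta_s\cdot\frac{Q}{Q-2}$ — and for a generic failure mode there is no reason any single fiber is enriched at all, since the average fiber density is $\delta_s$ regardless of how the fibers are arranged. Your proposed fix (ii), aggregating over all antipodal pairs, would at best show that after translating (the hypothesis and conclusion classes are translation-invariant) one can assume the pair $(0,Q/2)$ is the one that fails, but that is what you already assumed; it does not produce a fiber with elevated density. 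Fix (i), a Minkowski-sum expansion estimate for $B^{(s)}_0+\{-2,\ldots,2\}^{P-s-1}$, controls the wrong quantity: expansion of one fiber says nothing about the density of \emph{another} fiber. What makes the paper's argument work is that the object whose cardinality grows is the thickened set $B_s$ living in the same ambient space $\Z_Q^P$, not a conditional slice, and the growth is produced by an explicit almost-injection $\phi:B_{s+1}\to B_s\setminus B_{s+1}$ rather than an averaging/pigeonhole over fibers. I would recommend replacing the fiber-conditioning framework by the thickening framework; once you do, the target distance $2$ should \emph{not} appear in your intermediate dichotomy — it should emerge only at the end via the triangle inequality, with the running condition using radius $1$.
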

	
The proof of this lemma is adapted from Bourgain's proof of Lemma 3.5 in \cite{Bou}. 
\begin{proof}
For a set $A\subseteq \{0,1,...,P-1\}$, we define the relation 
$\sim_A$ on $\Z_Q ^P$ as follows:
For $x, y \in \Z_Q ^P$, 
$x\sim_A y :\iff x_i=y_i \text{ for all } i\in A 
\text{ and } d(x_i,y_i)\leq 1 \text{ for all } i\notin A$.
It is obvious that $\sim_A$ is reflexive 
($x\sim_A x$ for all $x\in \Z_Q ^P$) and symmetric ($x\sim_A y \iff y\sim_A x$).
Now, let $B\subseteq \Z_Q^P$ with 
$|B|>\frac{Q^P}{\ell}.$ For every $s\in \{0,1,...,P\}$, 
we denote the set $\{0,1,...,s-1\}$ by $[s]$ (where $[0]=\emptyset$).
Consider the set 
$B_s:=\{y\in \Z_Q^P : y\sim_{[s]} x \text{ for some } x\in B\}$.
Then, $B=B_P\subseteq B_{P-1}\subseteq...\subseteq B_0
\subseteq \Z_Q^P$.\\
\underline{Claim:} For some $s\in\{0,1,...,P-1\}$, there is 
some $y=(y_0,y_1,...,y_P)\in B_{s+1}$ such that replacing the 
$s$ coordinate of $y$ with any $k\in \Z_Q$ we still get a point in 
$B_{s+1}$, i.e. 
\begin{equation*}
y(k):=(y_0,...,y_{s-1},k,y_{s+1},...,y_{P-1})\in B_{s+1} 
\text{ for all } k\in \Z_Q.   
\end{equation*}

\begin{proof}[Proof of Claim]
\renewcommand\qedsymbol{$\triangle$}
We proceed by contradiction. Assume that the claim doesn't hold. 
Then for every $s\in \{0,1,...,P-1\}$ and every $y\in B_{s+1}$ there 
is a $k\in \Z_Q$ such that 
$y(k) \notin B_{s+1}$. 
Since $y=y(y_s)\in B_{s+1}$ we conclude that there exists a $k\in \Z_Q$ 
such that $y(k)\notin B_{s+1}$ but $y(k-1)\in B_{s+1}$. 
Then, $y'=y(k)\in B_{s}\setminus B_{s+1}$ and $y'$ only disagrees with 
$y$ at the $s$ coordinate. So we have that for all $s\in \{0,1,...,P-1\}$ 
and all $y\in B_{s+1}$ there is some $y'\in B_{s}\setminus B_{s+1}$ 
such that $y'$ only disagrees with $y$ at the $s$ coordinate.
			
Fix $s\in \{0,1,...,P-1\}$. Then, according to the above, we can define 
a map $\phi: B_{s+1} \to B_{s} \setminus B_{s+1}$ so that for every $y\in B_{s+1}$,
$\phi (y)$ disagrees with $y$ only at the $s$ coordinate. 
Then 
\begin{equation}\label{lmeq}
|B_{s+1}|=|\phi ^{-1} (B_{s} \setminus B_{s+1})|
=
\Big| \bigcup_{z\in B_{s} \setminus B_{s+1}} \phi^{-1} (\{z\})\Big|
=
\sum_{z\in B_{s} \setminus B_{s+1}} |\phi^{-1} (\{z\})|.
\end{equation}
Let $z \in B_{s} \setminus B_{s+1}$. For each $y \in \phi^{-1} (\{z\})$, we have that 
$y$ and $z=\phi (y)$ disagree only at the $s$ coordinate. Since there are exactly 
$Q-1$ points in $\Z_Q^P$ which disagree with $z$ only at the $s$ coordinate, we 
obtain that $|\phi^{-1} (\{z\})|\leq Q-1$. Using \eqref{lmeq} we get that 
$|B_{s+1}|\leq (Q-1) |B_{s} \setminus B_{s+1}| $, 
or equivalently $|B_{s} \setminus B_{s+1}| \geq \frac{|B_{s+1}|}{Q-1}$, and therefore 
$|B_{s}|\geq \frac{Q}{Q-1}|B_{s+1}|$.

Then, inductively we obtain that
\begin{equation}\label{ldeq1}
|B_0|
\geq
\Bigg(\frac{Q}{Q-1}\Bigg) |B_1|
\geq ...\geq 
\Bigg(\frac{Q}{Q-1}\Bigg)^P |B_P|
=
\Bigg(\frac{Q}{Q-1}\Bigg)^P |B|
>
\Bigg(\frac{Q}{Q-1}\Bigg)^P \frac{Q^P}{\ell}.
\end{equation}
Since $P>Q\log \ell$, from \eqref{ldeq1} we obtain that 
\begin{equation}\label{hhld1}
|B_0|
>
\Bigg( \frac{Q}{Q-1} \Bigg)^{Q\log \ell} \frac{Q^P}{\ell}
=
Q^P \ell ^{Q(\log Q - \log (Q-1))-1}
\geq 
Q^P,
\end{equation}
where the last inequality in \eqref{hhld1} is due to 
$\log Q-\log (Q-1)\geq \frac{1}{Q}$.
On the other hand, $B_0\subseteq \Z_Q ^P$, and therefore 
$|B_0|\leq Q^P$, which is a contradiction according to \eqref{hhld1}.
\end{proof}
Now we are ready to finish the proof of the lemma. 
We use the claim to find an $s\in \{0,1,...,P-1\}$ and a 
$y=(y_0,y_1,...,y_{P-1})\in B_{s+1}$ such that for every $k\in \Z_Q$, 
$(y_0,...,y_{s-1},k,y_{s+1},...,y_{P-1})\in B_{s+1}$. 
Consider the elements 
\[z=(y_0,...,y_{s-1},0,y_{s+1},...,y_{P-1}) \text{ and } 
z'=(y_0,...,y_{s-1},\frac{Q}{2},y_{s+1},...,y_{P-1}),\] 
both of which lie in $B_{s+1}$. Now, take $x,x'\in B$ such that 
$x\sim_{[s+1]} z$ and $x'\sim_{[s+1]} z'$. 
Then, for $i\in \{0,1,...,s-1\}$ we have $x_i=z_i=y_i=z_i '=x_i '$. 
In additon $x_s=z_s=0$, 
$x_s ' =z_s ' =\frac{Q}{2}$ and for 
$i\in \{s+1,...,P-1\} $ we have 
$ d(x_i,x_i')\leq d(x_i,z_i)+d(z_i,z_i')+d(z_i',x_i')\leq 1+0+1=2 $.
\end{proof}
	
The second lemma we need is the following stronger form of the 
Poincar\'e Recurrence Theorem:
\begin{lemma}\label{PRT}
Let $(X,\mathcal{B},\mu, T)$ be a m.p.s. and let $E\in \mathcal{B}$ 
with $\mu(E)>0$. Then for some $n\leq \frac{2}{\mu(E)}$ we have 
$\mu(E\cap T^{-n}E)\geq \frac{\mu(E)^2}{2}$.
\end{lemma}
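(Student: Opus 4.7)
The plan is the classical Cauchy--Schwarz / second-moment averaging argument for the Poincar\'e recurrence theorem. Fix an integer $N$ to be chosen later (I will eventually take $N=\lceil 2/\mu(E)\rceil$) and introduce the counting function
\[ f \; := \; \sum_{n=0}^{N-1}\mathbbm{1}_{T^{-n}E}, \]
which measures how many of the first $N$ iterates of a point land in $E$. Since $T$ preserves the probability measure $\mu$, one immediately has $\int_X f\,d\mu = N\mu(E)$, and Cauchy--Schwarz gives $\int_X f^2\,d\mu \geq \bigl(\int_X f\,d\mu\bigr)^2 = N^2\mu(E)^2$.

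Next I would expand $\int_X f^2\,d\mu$ by opening the square and invoking $T$-invariance, which rewrites each cross term $\mu(T^{-n}E\cap T^{-m}E)$ (for $n\leq m$) as $\mu(E\cap T^{-(m-n)}E)$. Grouping terms by $k:=m-n\in\{1,\ldots,N-1\}$ and combining with the Cauchy--Schwarz lower bound yields an inequality of the shape
\[ \sum_{k=1}^{N-1}(N-k)\,\mu(E\cap T^{-k}E) \; \geq \; \tfrac{1}{2}N\mu(E)\bigl(N\mu(E)-1\bigr). \]

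A weighted pigeonhole (using that the total weight $\sum_{k=1}^{N-1}(N-k)$ equals $N(N-1)/2$) then produces some $k\in\{1,\ldots,N-1\}$ with
\[ \mu(E\cap T^{-k}E) \; \geq \; \frac{\mu(E)\bigl(N\mu(E)-1\bigr)}{N-1}. \]
Choosing $N=\lceil 2/\mu(E)\rceil$ makes $N\mu(E)\geq 2$, so $N\mu(E)-1\geq N\mu(E)/2$, and the lower bound becomes at least $\mu(E)^2/2$; moreover $k\leq N-1 < 2/\mu(E)$, giving the required range for $n:=k$. The argument is entirely standard; the only mild care required is matching the constants $1/2$ and $2/\mu(E)$ exactly, which this choice of $N$ handles, so I do not foresee a real obstacle.
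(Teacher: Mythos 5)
Your proof is correct and follows essentially the same Cauchy--Schwarz second-moment argument as the paper: the paper likewise sums $\mathbbm{1}_{T^{-n}E}$ over a window of length about $\lceil 2/\mu(E)\rceil$, squares, applies Cauchy--Schwarz, and averages over pairs. The only cosmetic difference is that the paper bounds the off-diagonal sum by $(R^2-R)$ times the maximum of $\mu(T^{-i}E\cap T^{-j}E)$ and then sets $n=j-i$, whereas you group directly by the difference $k=m-n$ and apply a weighted pigeonhole; both routes yield the same bound $\mu(E)^2/2$.
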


For sake of completeness, we give the proof of this lemma.
\begin{proof}
For every $n\in \N$ consider the function $f_n=\mathbbm{1}_{T^{-n}E}$. 
Let $\alpha=\mu(E)$ and take $R\in \N$ with $R\geq \frac{2}{\alpha}$.
Using the Cauchy-Schwarz inequality we obtain that
\begin{eqnarray*}
(R\alpha)^2
&=&
\Bigg( \int_X \sum_{n=1}^{R} f_n \diff \mu\Bigg)^2
\leq
\int_X \Bigg(\sum_{n=1}^{R} f_n \Bigg)^2 \diff \mu
=
\sum_{n=1}^{R} \int_X f_n^2 \diff \mu + 2\sum_{\substack{n,m=1\\n<m}}^{R} 
\int_X f_n f_m \diff \mu
=\\
&=&
R\alpha + 2\sum_{\substack{n,m=1\\n<m}}^{R} \mu(T^{-n}E \cap T^{-m}E)
\leq
R\alpha + (R^2-R)\:\: 
\max_{\substack{i,j\in \{1,...,R\} \\ i<j}}\:\:\mu(T^{-j}E \cap T^{-i}E)
\end{eqnarray*}
As a result
\begin{equation}\label{mdeq}
\max_{\substack{i,j\in \{1,...,R\} \\ i<j}}\:\:\mu(T^{-j}E \cap T^{-i}E) 
\geq 
\frac{R\alpha ^2 -\alpha}{R-1}
\geq
\frac{\alpha^2}{2}.   
\end{equation}
Since $R$ was arbitrary, \eqref{mdeq} holds for every 
$R\in \N$ with $R\geq \frac{2}{\alpha}$. Take 
$R_0:=\ceil{\frac{2}{\alpha}}$
($=$the ceiling of $\frac{2}{\alpha}$,
i.e. the smallest integer which is greater than or equal $\frac{2}{\alpha}$). 
Then $R_0$ satisfies \eqref{mdeq}, and hence there exist 
$i,j\in \{1,...,R_0\}$ with $i<j$ 
such that 
$\mu(T^{-j}E\cap T^{-i}E)\geq \frac{\alpha^2}{2}$. 
Taking $n:=j-i$ we have that 
$1\leq n\leq \frac{2}{\alpha}=\frac{2}{\mu(E)}$ 
and 
$\mu(E\cap T^{-n}E)\geq \frac{\alpha^2}{2}=\frac{\mu(E)^2}{2}$.
\end{proof}
	
Now, we are ready to present the proof of \cref{comblem}:
\begin{proof}[Proof of \cref{comblem}:]
Let $j,Q \in \N$, with $Q$ even and sufficiently large depending on $j$, 
and let $P\in \N$ with $P>Q \log 2j^2$. 
In addition, let $E\subseteq [Q^P]=\{0,1,...,Q^P -1\}$ with 
$|E| > \frac{Q^P}{j}$. We will identify $[Q^P]$ 
with the product $X=\Z_Q^P$ through the expansion in base $Q$ digits. 
Give $X$ the normalised counting probability measure $\mu$ 
and consider the measure-preserving transformation
\[T: X\to X,\:\: T(x_0,x_1,...,x_{P-1})=(x_0+4, x_1+4,...,x_{P-1}+4)\]
where the addition is $\bmod Q$.
Then $\mu(E)=\frac{|E|}{Q^P}>\frac{1}{j}$, so by \cref{PRT}, 
there is some $n\in \N$ with $1\leq n\leq \frac{2}{\mu(E)}<2j$ 
such that $\mu\big(T^{-n}E\cap E\big)\geq \frac{\mu(E)^2}{2}$. 
For such an $n$, consider the set $B:=T^{-n}E\cap E$. 
Then $|B|=\mu(B)Q^P>\frac{Q^P}{2j^2}$ and we can apply \cref{comblem1} 
for the set $B$ (and for $\ell=2j^2$) to obtain a pair of points 
$x,x'\in B$ and some integer $s\in \{0,1,...,P-1\}$ such that 
\begin{itemize}
    \item $x_i=x_i'$ for $i\in \{0,1,...,s-1\}$
    \item $x_s=0,\:x_s'=\frac{Q}{2}$
    \item $d(x_i,x_i')\leq 2 $ for $i\in \{s+1,...,P-1\}$
\end{itemize}
Since $x'\in B$, we have that $T^n x'\in E$. Also $x\in B\subseteq E$. \\
\underline{$T^n x' \neq x:$} $T^n x'=(x_0 ' +4n,...,x_{P-1} ' +4n)$ 
and $x=(x_0,...,x_{P-1})$. Since $4n<8j$, we have 
$1\leq x_s ' +4n =\frac{Q}{2} +4n < \frac{Q}{2} +8j$. 
$Q$ is chosen sufficiently large depending on $j$, so we may 
assume that $\frac{Q}{2}+8j<Q$. 
Therefore, we have that $1\leq x_s'+4n<Q$ and $x_s=0$, 
which in particular gives that $x_s \neq x_s'+4n\:\:\bmod Q$. 
As a result therefore $T^nx'\neq x$.\\
Now, we see $T^n x',x$ as elements of $\N$. 
Consider $y=\max \{T^nx'-x,x-T^nx'\}$ where now the subtraction is 
taken in $\Z$. Then $y\in E-E$ and $y>0$. 
Let $y=\sum_{i=0}^{P-1} y_iQ^i$ be the base $Q$ expansion of $y$. 
Because of the possible borrow of digits when 
we perform the subtraction we have that 
$y_i\:\:\bmod Q\in \{
x_i'+4n-x_i\:\:\bmod Q,x_i'+4n-x_i-1\:\:\bmod Q, 
x_i-x_i'-4n\:\:\bmod Q,x_i-x_i'-4n-1\:\:\bmod Q\}.$ 
In every case, since $1\leq n <2j$ we have that 
$\frac{Q}{2}\leq y_s <\frac{Q}{2}+8j$ 
and for $i\neq s$ we have $1\leq y_i<8j$. 
This concludes the proof of the lemma.
\end{proof}
	
\subsection{Some measures on the $N$-th roots of unity}
The following lemma gives rise to some measures that will be useful
in the proof of the \cref{wed9}:
\begin{lemma}[Cf. {\cite[Lemma 5.1]{Bou}}]\label{ldl1}
Let $\ell, Q \in \N$, with $Q$ even and sufficiently large depending on $\ell$. 
Then there is an absolute positive constant 
\footnote{This constant $C$ is independent of $\ell$ and $Q$. 
From the proof one sees that we can choose for example $C=320$.} 
$C$ such that for every $k\in \N$, 
there is a non-negative finite measure $\sigma$ on the torus 
$\T$ satisfying the following:
\begin{itemize}
\item $\sigma$ is supported on $Q^{k+1}$-th roots of unity, so by \cref{lml}
its Fourier transform is periodic with period $Q^{k+1}$
\item $\widehat{\sigma}(0)\leq  1 + C  \frac{\ell^3}{Q^2}$
\item $\widehat{\sigma}(m)=1$ for $m\in \Z$ with 
$1\leq \frac{m}{Q^k}\leq \ell$
\item $\widehat{\sigma}(m)=-1$ for $m\in \Z$ with 
$\frac{Q}{2}\leq \frac{m}{Q^k}\leq \frac{Q}{2}+\ell$. 
\end{itemize}
\end{lemma}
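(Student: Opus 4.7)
The plan is a two-stage construction that reduces the lemma to a ``base scale'' ($k=0$) problem via a Fej\'er-kernel dilation.

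\textbf{Stage 1 (Base case).} First I would construct a non-negative trigonometric polynomial $G\colon\T\to\R_{\geq 0}$ of degree at most $Q/2+\ell$ with exactly the prescribed Fourier values at scale $1$, namely $\widehat{G}(r) = 1$ for $r \in \{1,\dots,\ell\}$, $\widehat{G}(Q/2) = -\tfrac{1}{2}$, $\widehat{G}(Q/2+r) = -1$ for $r \in \{1,\dots,\ell\}$, and $\widehat{G}(r) = 0$ for all other $r \in \{\ell+1,\dots,Q-1\} \setminus \{Q/2,\dots,Q/2+\ell\}$, with $\widehat{G}(0) \leq 1 + C\ell^3/Q^2$ (and the analogous values for negative $r$ by reality). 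The construction starts from the real trigonometric polynomial $T$ with exactly these prescribed Fourier values (and $\widehat{T}(0)=1$, zero elsewhere), which is generally not pointwise non-negative; one then adds a carefully chosen Fej\'er-kernel-based correction $E\ge 0$ whose Fourier transform vanishes identically on the prescribed frequency set (so that $\widehat{G}=\widehat{T}$ there) and which dominates the negative part of $T$, yielding $G:=T+E\ge 0$. Quantitative control of $\widehat{E}(0)$ uses Fej\'er-kernel estimates in the spirit of \cref{ldl2}.

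\textbf{Stage 2 (Lifting via Fej\'er dilation and sampling).} Given such a $G$, set $p(t) := F_{Q^k}(t)\,G(Q^k t)$, which is a non-negative trigonometric polynomial of degree less than $Q^{k+1}$. Using $\widehat{F_{Q^k}}(m) = (1 - |m|/Q^k)_+$, a direct computation of the convolution $\widehat{F_{Q^k}}\ast\widehat{G(Q^k\cdot)}$ shows that $\widehat{p}$ is the piecewise-linear interpolant of $\widehat{G}$ along the lattice $Q^k\Z$: for $n=rQ^k+a$ with $0\le a<Q^k$, one has $\widehat{p}(n)=(1-a/Q^k)\widehat{G}(r)+(a/Q^k)\widehat{G}(r+1)$. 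In particular, flat values of $\widehat{G}$ on a block of consecutive integers lift to flat values of $\widehat{p}$ on the corresponding scaled interval. I would then define
\[
\sigma := \frac{1}{Q^{k+1}}\sum_{j=0}^{Q^{k+1}-1} p(j/Q^{k+1})\,\delta_{j/Q^{k+1}},
\]
a non-negative measure supported on $Q^{k+1}$-th roots of unity. The sampling/aliasing formula $\widehat{\sigma}(m)=\widehat{p}(m)+\widehat{p}(m-Q^{k+1})$ (valid for $m\in[0,Q^{k+1})$ since $\deg p<Q^{k+1}$), together with the choices $\widehat{G}(Q/2)=-\tfrac{1}{2}$ (so that the two reflected aliased contributions at $m=(Q/2)Q^k$ add to $-1$) and $\widehat{G}(Q-r)=0$ for $r\in[1,\ell]$ (so no aliasing contribution on the ``$+1$'' block), gives exactly $\widehat{\sigma}(m)=\pm 1$ on the two prescribed intervals, with $\widehat{\sigma}(0)=\widehat{G}(0)\le 1+C\ell^3/Q^2$.

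\textbf{Main obstacle.} The hardest part is Stage 1: constructing $G\ge 0$ with the sharp mass bound. A crude correction (e.g.\ adding a constant large enough to dominate $-T$) would contribute $\Omega(\ell)$ to $\widehat{G}(0)$, far exceeding the $O(\ell^3/Q^2)$ budget. The sharp bound requires $E$ to be carefully localized in space---essentially concentrated on the set where $T$ is negative, which one shows has measure of order $\ell/Q$ with $|T|$ of order $\ell^2/Q$ there---while keeping $\widehat{E}$ identically zero on the prescribed frequency set. Achieving this delicate balance (quantifying the negative set of $T$ and producing a corresponding frequency-vanishing, pointwise-dominating Fej\'er-based bump) is the technical core of the argument, directly inherited from Bourgain's construction in \cite{Bou}, from which the explicit constant $C\le 320$ is ultimately extracted.
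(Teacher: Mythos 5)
Your Stage 2 (the Fej\'er dilation $p(t)=F_{Q^k}(t)\,G(Q^kt)$ followed by $Q^{k+1}$-point sampling, with the piecewise-linear interpolation identity for $\widehat{p}$ and the aliasing formula for $\widehat{\sigma}$) is a correct and clean reduction; I checked the endpoint cases, including why $\widehat{G}(Q/2)=-\tfrac{1}{2}$ doubles up to $-1$ via aliasing and why the degree bound kills the spurious aliased contributions on the positive block. The gap is entirely in Stage 1, which you acknowledge is ``the technical core,'' but which you then wave off as ``directly inherited from Bourgain's construction.'' It is not. The paper's measure is \emph{not} of the sampled form $\frac{1}{N}\sum_j p(j/N)\delta_{j/N}$ for a globally non-negative trigonometric polynomial $p$: it is
\[
\sigma \;=\; \tfrac{1}{2}\bigl(\delta_{1/N}+\delta_{-1/N}\bigr)\;+\;\tfrac{1}{N}\sum_{n=0}^{N-1}s\bigl(n/N\bigr)\,\delta_{-n/N},
\]
and the explicit Dirac mass contributes a $\cos(2\pi m/N)$ term to $\widehat{\sigma}$ that combines with the non-flat Fourier profile of $s$ (which on the positive block equals $1-\cos(2\pi m/N)$, not a constant) to produce the flat values $\pm 1$. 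Moreover, the sampling polynomial $s=16\ell(p\ast F_{Q^k})+rp$ in the paper is built directly at scale $Q^k$ and is \emph{not} of the factored form $F_{Q^k}(t)G(Q^kt)$. So there is no base-scale $G\geq 0$ lurking in Bourgain's or the paper's argument for you to extract.

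This matters because your Stage 1 demands something strictly stronger than what the paper proves: a polynomial $G$ of degree $<Q$ that is non-negative on \emph{all} of $\T$ (not merely at the $Q$-th roots of unity), with $\widehat{G}$ flat equal to $\pm 1$ on two blocks of length $\ell$, zero on the large gap in between (some $\sim Q/2$ frequencies), and total mass $\widehat{G}(0)\leq 1+C\ell^3/Q^2$. Since $G\geq 0$ forces $|\widehat{G}(m)|\leq\widehat{G}(0)$, you are pinning $2\ell+1$ Fourier coefficients at or essentially at the extreme value while leaving almost no slack in the mass; this is a delicate Selberg-type extremal problem, and the ``$T+E$'' correction scheme you sketch is not obviously feasible. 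Your key quantitative claim --- that the negativity set of $T$ has measure $O(\ell/Q)$ with $|T|=O(\ell^2/Q)$ there, and that a frequency-avoiding Fej\'er bump $E$ can be localized to it --- is asserted without proof, is not a direct consequence of anything in \cite{Bou} or in this paper, and does not follow from \cref{ldl2} or \cref{polcr} as stated. Until Stage 1 is actually carried out, the argument has a genuine hole; and since the paper's route avoids the base-scale factoring entirely, you cannot borrow its proof of positivity to fill it.
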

	
\begin{proof}
$Q$ is sufficiently large depending on $\ell$, 
so we may assume that $Q>4\ell$. 
Let $N:=Q^{k+1}$ and consider the real trigonometric polynomials 
\begin{equation*}
p:\T \to \C,\: 
p(x)=\sum_{\substack{m\in \Z\\ |m|\leq \ell Q^k}}  
\Bigg( 1-\cos2\pi \frac{\ell Q^k-|m|}{N} \Bigg) e^{2\pi imx} 
\end{equation*}
\begin{equation*}
r:\T \to \C,\: r(x)=2\cos\big(\ell Q^k2\pi x\big)-\cos 
\Big( \Big(\frac{N}{2}-\ell Q^k\Big)2\pi x\Big) -\cos \Big(\Big(\frac{N}{2}+
\ell Q^k\Big)2\pi x\Big) . 
\end{equation*}
In addition, let $F_{Q^k}$ denote the $Q^{k}$-th Fejer kernel 
and consider the real trigonometric polynomial
\[s(x)=16\ell p\ast F_{Q^k}\:(x) +r(x)p(x).\]
We want to show that $s\geq 0$. Consider the function 
\begin{equation*}
f:\N_0 \to \R,\:\: f(m)=
\begin{cases}
1-\cos2\pi \frac{\ell Q^k-m}{N},\:\:\text{ if }\:0\leq m \leq \ell Q^k\\
0,\:\:\text{ otherwise }
\end{cases}   
\end{equation*}
We have assumed that $4\ell<Q$, 
and hence for every $x\in [0,\ell Q^k]$, we have 
$0\leq \frac{\ell Q^k-x}{N}< \frac{\pi}{2}$.
Then the function $x \mapsto 1- \cos 2\pi \frac{\ell Q^k-x}{N} $ 
is decreasing and convex in the interval $[0,\ell Q^k]$, which 
implies that $f$ is decreasing and convex in $\N_0$. 
Obviously, $f$ is non-negative, and therefore $f$ satisfies the 
conditions in \cref{polcr}. As a result, we 
obtain that the trigonometric polynomial 
$p(x)=\sum_{m\in \Z} f(|m|)e^{2\pi i mx}$ satisfies $p\geq 0$.  
Using \cref{ldl2} we obtain that
$p\leq (4\ell p)\ast F_{Q^k}$ and hence $4p\leq (16 \ell p)\ast F_{Q^k}$.
In addition, $|r(x)|\leq 4$, so we have that 
$-r(x)p(x)\leq |r(x)|p(x)\leq 4p(x)\leq (16 \ell p)\ast F_{Q^k}$, 
and therefore $s(x)\geq 0 \text{ for every } x\in \T$.
		
We can then consider the non-negative finite measure 
$\sigma$ on the torus $\T$, defined by
\begin{equation*}
\sigma=\frac{1}{2}\Big(\delta_{\frac{1}{N}} + \delta_{\frac{-1}{N}}\Big)
+ 
\frac{1}{N}\sum_{n=0}^{N-1} s\Big(\frac{n}{N}\Big)\delta_{\frac{-n}{N}}.
\end{equation*}
$\sigma$ is clearly supported on the $N$-th roots of unity. 
Now we need to check that the Fourier transform of $\sigma$ 
has the desired properties. 
To do that, we first compute the Fourier transforms of $p$ and $r$.
\begin{equation}\label{hhld2}
\widehat{p}(m)= 
\begin{cases}
1-\cos\Big(2\pi \frac{\ell Q^k-|m|}{N}\Big), \text{ if } |m|\leq \ell Q^k\\
0,\text{ otherwise}. \end{cases} 
\end{equation}
On the other hand, for the Fourier transform of $r$ we have
		
\begin{equation}\label{hhld3}
\widehat{r}(m)= 
\begin{cases}
1, \text{ if } |m| = \ell Q^k\\
-\frac{1}{2}, \text{ if } |m| = \frac{N}{2} - \ell Q^k  
\text{ or } |m|= \frac{N}{2} + \ell Q^k\\
0, \text{ otherwise}. \end{cases} 
\end{equation}
	
Next, we will compute the Fourier transform of s. For every $m\in \Z$, we have 
\begin{equation}\label{form}
\widehat{s}(m)
=
16\ell \widehat{p}(m)\widehat{F_{Q^k}}(m)+\sum_{t\in \Z}\widehat{r}(t)\widehat{p}(m-t).
\end{equation}
	
We now split in cases depending on $m\in \Z$.\\
\underline{\textbf{Case 1.}} If $|m|\geq N=Q^{k+1}$:\\
$|m|>Q^k$, so $\widehat{F_{Q^k}}(m)=0$ 
and therefore using \eqref{form} and \eqref{hhld3} we obtain
\begin{eqnarray*}
\widehat{s}(m)&=&\widehat{p}(m-\ell Q^k) 
+
\widehat{p}(m+\ell Q^k) -\frac{1}{2} \widehat{p}(m+\frac{N}{2}-\ell Q^k)\\
& & -\frac{1}{2} \widehat{p}(m+\frac{N}{2}+\ell Q^k) 
-
\frac{1}{2} \widehat{p}(m-\frac{N}{2}-\ell Q^k) 
-\frac{1}{2} \widehat{p}(m-\frac{N}{2}+\ell Q^k).
\end{eqnarray*}
For $Q$ sufficiently large depending on $\ell$, 
all the arguments in the above expression have absolute value greater 
than $\ell Q^k$, and hence from \eqref{hhld2} we obtain 
that $\widehat{s}(m)=0$. So we proved that
\begin{equation}\label{lde1}
\widehat{s}(m)
=
0 
\text{ for every } m\in\Z \text{ with } 
|m|\geq N=Q^{k+1}.
\end{equation}
\underline{\textbf{Case 2.}} If $m=0$:\\
$Q$ is sufficiently large depending on $\ell$, so we may assume that 
$\ell Q^k< \frac{N}{2} -\ell Q^k$. Then using \eqref{form}, \eqref{hhld2} and \eqref{hhld3} 
we obtain 
\begin{equation}\label{lde2}
\widehat{s}(0)
=
16\ell \widehat{p}(0)\widehat{F_{Q^k}}(0)
=
16\ell \Bigg(1-\cos2\pi \frac{\ell}{Q}\Bigg)
\leq
16\ell \times 20 \Bigg(\frac{\ell }{Q}\Bigg)^2
\leq
C \frac{\ell ^3}{Q^2},
\end{equation}
where for the first inequality we use the fact that 
$\text{for every } x\in \R,\:\: 1- \cos(2\pi x)\leq 20x^2$.\\
\underline{\textbf{Case 3.}} If $Q^k\leq |m|\leq \ell Q^k$:\\
$|m|\geq Q^k$, so $\widehat{F_{Q^k}}(m)=0$. 
$Q$ is sufficiently large depending on $\ell$, so we may assume that 
$\frac{Q}{2} -2\ell>\ell$. 
Then, for every $t\in \Z$ with $|t|\geq \frac{N}{2} - \ell Q^k$ we have that 
$|m-t|\geq |t|-|m|\geq \Big(\frac{Q}{2} -2\ell \Big)Q^k>\ell Q^k$, 
which implies that 
$\widehat{p}(m-t)=0$. Using \eqref{form} we then get
	
\begin{equation*}
\widehat{s}(m)=\sum_{\substack{t\in \Z \\ |t|
<
\frac{N}{2}-\ell Q^k}}\widehat{r}(t)\widehat{p}(m-t).    
\end{equation*}
For the $t$'s in the above sum, we know that $\widehat{r}(t)$ 
is non-zero exactly when $t=\ell Q^k$ or $t=-\ell Q^k$.\\
\underline{\textbf{Subcase 3.1.}} If $m>0$, then 
$|m-\ell Q^k|=\ell Q^k-m<\ell$ 
and therefore
\begin{equation*}
\widehat{p}(m-\ell Q^k)
=
1-\cos\Big(2\pi \frac{\ell Q^k-|m-\ell Q^k|}{N}\Big)
=
1-\cos2\pi \frac{m}{N}.
\end{equation*}
On the other hand, $|m+\ell Q^k|>\ell Q^k$ 
and hence 
$\widehat{p}(m+\ell Q^k)=0$. 
Combining all those, we obtain that 
$\widehat{s}(m)=\widehat{p}(m-\ell Q^k)= 1-\cos2\pi \frac{m}{N}$.\\
\underline{\textbf{Subcase 3.2.}} If $m<0$, 
then similarly one shows that 
$\widehat{s}(m)=\widehat{p}(m-\ell Q^k)= 1-\cos2\pi \frac{m}{N}$.\\
So after all we have that 
\begin{equation}\label{lde3}
\widehat{s}(m)
=
1-\cos2\pi \frac{m}{N} 
\text{ for every } m\in \Z \text{ with } 
Q^k\leq |m|\leq \ell Q^k.
\end{equation}
\underline{\textbf{Case 4.}} If $Q^k-N\leq m \leq \ell Q^k-N$:\\
Again, since $Q$ is chosen sufficiently large depending on $\ell$, 
we may assume that
$\ell Q^k-N<-Q^k$ and $2\ell Q^k-\frac{N}{2}<-\ell Q^k$. 
Then $m< -Q^k$, and thus
$\widehat{F_{Q^k}}(m)=0$. Therefore, using \eqref{form} we get that 
$\widehat{s}(m)
=
\sum_{t\in \Z}\widehat{r}(t)\widehat{p}(m-t)$. 
If $t\in \Z$ is such that $\widehat{r}(t)\neq 0$, 
then $-t \leq \frac{N}{2}+\ell Q^k$, so 
$m-t \leq 2\ell Q^k-\frac{N}{2}<-\ell Q^k$ and therefore $\widehat{p}(m-t)=0$. 
As a result we obtain
\begin{equation}\label{lde4}
\widehat{s}(m)
=
0 
\text{ for every } m\in \Z \text{ with } 
Q^k-N\leq m \leq \ell Q^k-N.
\end{equation}
\underline{\textbf{Case 5.}} If $\frac{N}{2}\leq m \leq \frac{N}{2}
+\ell Q^k$:\\
$m\geq \frac{N}{2}>Q^k$, so 
$\widehat{F_{Q^k}}(m)=0$ and using \eqref{form} we have that 
$\widehat{s}(m)=\sum_{t\in \Z}\widehat{r}(t)\widehat{p}(m-t)$. 
The only $t\in \Z$ such that $\widehat{r}(t)\neq 0$ and 
$|m-t| \leq \ell Q^k$ is $t=\frac{N}{2}+\ell Q^k$. 
Therefore using \eqref{hhld2} and \eqref{hhld3} we obtain that 
\begin{equation*}
\widehat{s}(m)
=
-\frac{1}{2} \widehat{p}\Big(m-\frac{N}{2}-\ell Q^k\Big)
=
-\frac{1}{2}\Bigg(1-\cos2\pi\frac{\ell Q^k-|m-N/2-\ell Q^k|}{N}\Bigg).
\end{equation*}
Observe that $|m-N/2-\ell Q^k|=N/2+\ell Q^k-m$. Therefore we showed that 
\begin{equation}\label{lde5}
\widehat{s}(m)
=
-\frac{1}{2}\Bigg(1-\cos2\pi\frac{m-N/2}{N}\Bigg) 
\text{ for every } m\in \Z \text{ with } 
\frac{N}{2}\leq m \leq \frac{N}{2}+\ell Q^k.
\end{equation}
\underline{\textbf{Case 6.}} If $-\frac{N}{2}\leq m \leq -\frac{N}{2}+\ell Q^k$:\\
$Q$ is chosen sufficiently large depending on $\ell$, so we may assume that 
$-\frac{N}{2}+\ell Q^k <-Q^k$. Then $m<-Q^k$, and thus 
$\widehat{F_{Q^k}}(m)=0$. Using \eqref{form} we then obtain 
$\widehat{s}(m)=\sum_{t\in \Z}\widehat{r}(t)\widehat{p}(m-t)$. 
The only  $t\in \Z$ such that $\widehat{r}(t)\neq 0$ and 
$|m-t| \leq Q^k$ is $t=-\frac{N}{2}+\ell Q^k$.
Therefore using \eqref{hhld2} and \eqref{hhld3} we obtain that
\begin{equation*}
\widehat{s}(m)
=
-\frac{1}{2} \widehat{p}\Big(m+\frac{N}{2}-\ell Q^k\Big)
=
-\frac{1}{2}\Bigg(1-\cos2\pi\frac{\ell Q^k-|m+N/2-\ell Q^k|}{N}\Bigg).
\end{equation*}
Observe that $|m+N/2-\ell Q^k|=-N/2+\ell Q^k-m$. Therefore we showed that
\begin{equation}\label{lde6}
\widehat{s}(m)
=
-\frac{1}{2}\Bigg(1-\cos2\pi\frac{m+N/2}{N}\Bigg) 
\text{ for every } m\in \Z \text{ with } 
-\frac{N}{2}\leq m \leq -\frac{N}{2}+\ell Q^k.
\end{equation}

Now we are ready to compute the Fourier transform of the measure $\sigma$.
From the definition of $\sigma$, 
we get that for every $m\in \Z$
\begin{equation}\label{hhld7}
\widehat{\sigma}(m)
=
\frac{1}{2}\Big(\widehat{\delta_{\frac{1}{N}}}(m) + 
\widehat{\delta_{\frac{-1}{N}}}(m)\Big)
+ 
\frac{1}{N}\sum_{n=0}^{N-1} s\Big(\frac{n}{N}\Big)\widehat{\delta_{\frac{-n}{N}}}(m)
=
\cos\Big(2\pi \frac{m}{N}\Big) + \frac{1}{N}\sum_{n=0}^{N-1}
s\Big(\frac{n}{N}\Big) e^{-2\pi im\frac{n}{N}}.
\end{equation}
For $m=0$, using \eqref{lde12} and \eqref{lde2} we get
\begin{equation*}
\widehat{\sigma}(0)
=
1+ \frac{1}{N}\sum_{n=0}^{N-1} s\Big(\frac{n}{N}\Big)
=
1+\widehat{s}(0)
\leq
1+ C \frac{\ell ^3}{Q^2}.
\end{equation*}
Using \eqref{hhld7} and the Fourier inversion formula on $s$, we obtain that
\begin{eqnarray*}
\widehat{\sigma}(m)-\cos\Big(2\pi \frac{m}{N}\Big)
&=& 
\frac{1}{N}\sum_{n=0}^{N-1} \sum_{t\in \Z} \widehat{s}(t)e^{2\pi it 
\frac{n}{N}} e^{-2\pi im\frac{n}{N}}
=
\sum_{t\in \Z} \widehat{s}(t) \frac{1}{N}\sum_{n=0}^{N-1} 
e^{2\pi i (t-m)\frac{n}{N}}
=\\
&=&
\sum_{t\in \Z} \widehat{s}(t+m)\frac{1}{N}\sum_{n=0}^{N-1} 
e^{2\pi i t \frac{n}{N}}
=
\sum_{t\in \Z}\widehat{s}(t N+m).
\end{eqnarray*}
From equation \eqref{lde1} we have that $\widehat{s}(t)=0$ for $|t|\geq N$. 
Thus if $0<m\leq N$, then 
\begin{equation}\label{lde20}
\widehat{\sigma}(m)
=
\cos\Big(2\pi \frac{m}{N}\Big)+\widehat{s}(m)+\widehat{s}(m-N).    
\end{equation}
For $m\in \Z$ with $1\leq m/Q^k\leq \ell $, we have that 
$Q^k\leq m \leq \ell Q^k$ and $ Q^k-N \leq m -N \leq \ell Q^k -N$. 
Therefore from equations \eqref{lde20}, \eqref{lde3} and \eqref{lde4} we have 
\begin{equation*}
\widehat{\sigma}(m)
=
\cos\Big(2\pi \frac{m}{N}\Big)+\Big(1-\cos\Big(2\pi \frac{m}{N}\Big)\Big)+0
=
1.
\end{equation*}
On the other hand, for $m\in \Z$ with $Q/2\leq m/Q^k\leq Q/2+ \ell$
we have
$N/2 \leq m \leq N/2 +\ell Q^k $ and    $-N/2\leq m -N \leq \ell Q^k-N/2$. 
Therefore from equations \eqref{lde20}, \eqref{lde5} and \eqref{lde6} we obtain
\begin{eqnarray*}
\widehat{\sigma}(m)&=&\cos\Big(2\pi \frac{m}{N}\Big)
-
\frac{1}{2}\Big(1-\cos2\pi \frac{m-N/2}{N}\Big)
-\frac{1}{2}\Big(1-\cos2\pi\frac{m-N/2}{N}\Big)=\\
&=&\cos\Big(2\pi\frac{m}{N}\Big)+\cos\Big(2\pi\frac{m}{N}-\pi\Big)-1=-1.
\end{eqnarray*}
\end{proof}
	
\subsection{The proof of \cref{wed9}}
The following remark is also useful:

\begin{remark}\label{ldl}
If $R$ is a set of $\epsilon$-recurrence, then there exists $n\in \N$ such that 
whenever $E\subseteq [n]$ and $|E|>\epsilon n$ we have $(E-E)\cap R \neq \emptyset$.
But we have $(E-E)\cap R \subseteq [n]$, so in fact $(E-E)\cap (R\cap [n])\neq \emptyset$,
which means that $R\cap [n]$ is a set of $\epsilon$-recurrence.
\end{remark}

Finally, we have assembled all the tools that we need in order to prove 
\cref{wed9}. For the convenience of the reader, we state \cref{wed9} again,
before proving it.
	
\begin{named}{\cref{wed9}}{}
For every $j\in \mathbb{N}$ and for every 
$\epsilon \in \big(0,\frac{1}{2}\big)$ 
there is a finite set 
$R_j \subseteq \mathbb{N}$ which is a set of 
$\big(\frac{1}{j}\big)$-recurrence but not an $\epsilon$-vdC set. 
\end{named}
	
\begin{proof}
Let $j\in \N$ and $\epsilon \in \big(0,\frac{1}{2}\big)$. Choose an even 
$Q\in \N$ sufficiently large depending on $j$, 
so that we can apply \cref{ldl1} (with $\ell = 8j$) and \cref{comblem}. 
Since $\epsilon <\frac{1}{2}$ we may choose $Q$ so large that 
\begin{equation*}
\frac{1}{1+\Big(1+C \frac{(8j)^3}{Q^2}\Big)^{\lfloor Q\log 2j^2\rfloor +1}}
>
\epsilon.    
\end{equation*}
Let $P:=\lfloor Q\log 2j^2\rfloor +1$. For each $k\in \{0,1,...,P-1\}$, 
apply \cref{ldl1} with $\ell=8j$ to get a measure $\sigma_k$. 
Consider the non-negative finite measure
$\sigma=\sigma_0\ast \sigma_1 \ast...\ast \sigma_{P-1}$ on $\T$.
For every $k\in \{0,1,...,P-1\}$ we have 
$\widehat{\sigma_k}(0)\leq 1+ C \frac{(8j)^3}{Q^2}$,
and therefore 
\begin{equation*}
0\leq \sigma(\T)
=
\widehat{\sigma}(0)
=
\widehat{\sigma_0}(0)\widehat{\sigma_1}(0)...\widehat{\sigma_{P-1}}(0)
\leq
\Bigg(1+ C \frac{(8j)^3}{Q^2}\Bigg)^P.   
\end{equation*}
Now, consider the measure 
$\mu=\frac{\sigma+\delta_0}{\sigma(\T)+1}$. 
Then 
$\mu(\T)
=
\frac{\sigma(\T)+\delta_0(\T)}{1+\sigma(\T)}
=
1$, i.e. $\mu$ is a probability measure on $\T$. Consider the set 
$S_j:=\big\{r\in \N:\:\widehat{\mu}(r)=0\big\}$. 
Since
\begin{equation*}
\mu(\{0\})
=
\frac{\sigma(\{0\})+1}{1+\sigma(\T)}
\geq 
\frac{1}{1+\sigma(\T)}
\geq 
\frac{1}{1+\Big(1+ C \frac{(8j)^3}{Q^2}\Big)^P}
>
\epsilon,    
\end{equation*}
we have that $S_j$ is not an $\epsilon$-vdC set.\\
\underline{Claim:} $S_j$ is a set of $\big(\frac{1}{j}\big)$-recurrence.
\begin{proof}[Proof of Claim]
Let $E\subseteq \{0,1,...,Q^P-1\}$ with 
$|E|>\frac{Q^P}{j}$. 
Since $P>Q\log 2j^2$, we can apply \cref{comblem} to obtain a 
$y\in E-E$ and an 
$s\in \{0,1,...,P-1\}$ such that when $y$ is written in base $Q$, i.e.
\begin{equation*}
y=\sum_{i=0}^{P-1} y_i Q^i,\:\:y_i\in \{0,1,...,Q-1\},
\end{equation*}
we have that
$\frac{Q}{2}\leq y_s<\frac{Q}{2}+8j$ and for 
$i\neq s$, we have $\:1\leq y_i<8j$. We will show that $y\in S_j$. 
Recall that for each $k\in \{0,1,...,P-1\}$, $\widehat{\sigma_k}$ is periodic
with period $Q^{k+1}$, and therefore
\begin{equation}\label{christmas}
\widehat{\sigma}(y)
=
\prod_{k=0}^{P-1} \widehat{\sigma_k}(y)
=
\prod_{k=0}^{P-1}\widehat{\sigma_k}\Bigg(\sum_{i=0}^{P-1} y_i Q^i\Bigg)
=
\prod_{k=0}^{P-1} \widehat{\sigma_k}\Bigg(\sum_{i=0}^{k} y_i Q^i\Bigg) .
\end{equation}
Recall that $Q$ is sufficiently large depending on $j$, so we may assume that 
$\frac{8j-1}{Q-1}\leq 1$. Then from the hypothesis on $y$ we have that
\begin{equation*}
\frac{Q}{2}
\leq 
\frac{\sum_{i=0}^{s} y_i Q^i}{Q^s}
\leq
\frac{Q}{2}+8j -1  + \frac{(8j-1)}{Q^s} \sum_{i=0}^{s-1} Q^i
\leq
\frac{Q}{2} +8j.    
\end{equation*}
Then, using the properties of $\widehat{\sigma_s}$ (see \cref{ldl1}), we obtain
that $\widehat{\sigma_s}(\sum_{i=0}^{s} y_i Q^i)=-1$. 
On the other hand, for $k\neq s$ we have
\begin{equation*}
1\leq \frac{\sum_{i=0}^{k} y_iQ^i}{Q^k}=y_k 
+
\sum_{i=0}^{k-1} y_i Q^{i-k} \leq 8j,
\end{equation*}
and using the properties of $\widehat{\sigma_k}$ (see \cref{ldl1}), 
we obtain that $\widehat{\sigma_k}(\sum_{i=0}^{k} y_iQ^i)=1$. 
Then, using \eqref{christmas} we obtain $\widehat{\sigma}(y)=-1$ and therefore 
$\widehat{\mu}(y)
=
\frac{\widehat{\sigma}(y) +\widehat{\delta_0}(y)}{\sigma(\T)+1}
=
0$, which means that $y\in S_j$.
As a result, $y\in (E-E)\cap S_j$, which is to say that 
$E\cap (E-y)\neq \emptyset$. So, after all we proved that for any 
$E\subseteq \{0,1,...,Q^P-1\}$ 
with 
$|E|>\frac{Q^P}{j}$, 
there is some $y\in S_j$ such that 
$E\cap (E-y)\neq \emptyset$, 
and therefore $S_j$ is a set of 
$\big(\frac{1}{j}\big)$-recurrence. This concludes the proof of the claim. 
\renewcommand\qedsymbol{$\triangle$}
\end{proof}
		
Finally, from \cref{ldl}, we know that we can find a finite set 
$R_j\subseteq S_j$ such that $R_j$ is again a set of 
$\big(\frac{1}{j}\big)$-recurence. 
Of course, $\widehat{\mu}(y)=0$ for every 
$y\in R_j \:(\subseteq S_j)$ and therefore $R_j$ is not an 
$\epsilon$-vdC set. Hence, $R_j$ is a set with the desired properties, 
and this concludes the proof of the theorem.
\end{proof}

\bibliographystyle{alpha}
\bibliography{refs-andreas}

\bigskip
\footnotesize
\noindent
Andreas Mountakis\\
\textsc{University of Warwick} \par\nopagebreak
\noindent
\href{mailto:andreas.mountakis.1@warwick.ac.uk}
{\texttt{andreas.mountakis.1@warwick.ac.uk}}
\end{document}